\newtheorem{theo}{Theorem}[section]
\newtheorem{theorem}[theo]{Theorem}
\newtheorem{corollary}[theo]{Corollary}
\newtheorem{lemma}{Lemma}
\newtheorem{propo}{Proposition}
\newtheorem{theor}{Theorem}
\theoremstyle{definition}
\newtheorem{defii}{Definition}
 \newtheorem{remark}{Remark}
\newtheorem*{question}{Questions}	
\newtheorem*{questions}{Question}	
 \newtheorem{definition}{Definition}
\begin{document}

\title{Computability of F\o lner sets}

\author{Matteo Cavaleri}
\address[M. Cavaleri]{Institute of Mathematics of the Romanian Academy, 21 Calea Grivitei Street, 010702 Bucharest, Romania}
\email{matte.cavaleri@gmail.com}
\subjclass[2010]{20F10, 03D40, 43A07}
\thanks{This work was partially supported by a grant of the Romanian National Authority for Scientific Research and
Innovation, CNCS - UEFISCDI, project number PN-II-RU-TE-2014-4-0669, and partially supported by the European Research Council (ERC) grant ANALYTIC no. 259527 of Prof. Goulnara Arzhantseva.}

\begin{abstract}
We define the notion of computability of F\o lner sets for finitely generated amenable groups. We prove, by an explicit description, that the Kharlampovich groups,  finitely presented solvable groups with unsolvable Word Problem, have computable F\o lner sets. We also prove computability of F\o lner sets for  extensions -with subrecursive distortion functions- of amenable groups with solvable Word Problem by finitely generated groups with computable F\o lner sets . Moreover we obtain some known and some new upper bounds for the F\o lner function for these particular extensions.
\end{abstract}

\maketitle

\section*{Introduction}
In this paper we define and study an effective version of amenability for finitely generated groups in terms of computability of F\o lner sets. 
Let $\Gamma$ be a group generated by a finite set $X$. We denote by $\pi_{\Gamma,X}\colon\mathbb F_X\rightarrow\Gamma$ the canonical epimorphism from the free group over $X$ to $\Gamma$. For any $n\in \mathbb N$, an \emph{$n$-F\o lner set} for $\Gamma$ (with respect to $X$) is defined to be a non-empty finite subset $F\subset \Gamma$ such that 
\begin{equation}\label{folner}\frac{|F\setminus x F|}{|F|}\leq n^{-1}, \;\;\;\forall x\in X.\end{equation}
 $\Gamma$ is \emph{amenable} if it admits $n$-F\o lner sets, for all $n\in\mathbb N$. In order to make \emph{effective} this notion,  we simply ask for computability of finite preimages of F\o lner sets in the covering free group.
\begin{defii}\label{prima}

$\Gamma$ has \emph{computable F\o lner sets} with respect to $X$ if there exists an algorithm with:\\
INPUT: $n\in \mathbb N$\\
OUTPUT: $F \subset \mathbb F_X$ finite, such that $\pi_{\Gamma,X}(F)$ is $n$-F\o lner for $\Gamma$.
\end{defii}
\noindent This definition does not depend on the particular choice of the finite set of generators (see Proposition \ref{inva}).

The \emph{F\o lner function} $F_{\Gamma,X}$ of $\Gamma$ (with respect to $X$) was defined by Vershik \cite{VER} by
$$F_{\Gamma,X}(n):=\min\{|F|:\;\; F\subset \Gamma \mbox{  is $n$- F\o lner}\}.$$

 The aim of our investigation, and in particular of Definition \ref{prima}, 
  is to formalize and answer an old question, in our notation:
  \begin{questions}[Vershik]
   ``is it possible, in some sense, to (algorithmically) describe the $n$-Folner sets of $\Gamma$ even if there is no solution for the Word Problem?"
   \end{questions}
   This question arose after the construction by Kharlampovich of finitely presented groups, solvable and therefore amenable, with unsolvable Word Problem \cite{H} (following the notation in \cite{KMS}, let denote them by $G(M)$).
  
Indeed a finitely generated amenable group with solvable Word Problem has computable F\o lner sets: for every $n\in \mathbb N$ we can enumerate all finite subsets of $\mathbb F_X$ and for each subset check, by solvability of the Word Problem, condition \eqref{folner}, until we find the preimage of an $n$-F\o lner set:  the algorithm will eventually stop because $\Gamma$ is amenable (for details \cite{pre,CAV}).

A first negative answer to Vershik's question was given by Erschler \cite{A2}, providing examples of finitely generated groups with F\o lner functions growing faster than any given function (a result recovered in \cite{G,OO}): thus, when the given function is not subrecursive (i.e.\ without any recursive upper bound) there is no hope to algorithmically describe F\o lner sets of the associated groups (for details \cite{pre,CAV}).

On the other hand, in \cite{pre}, we proved that, if $\Gamma$ is recursively presentable and amenable then\\
(i) the F\o lner function of $\Gamma$ is subrecursive;\\
(ii) there exists an algorithm computing the Reiter functions of $\Gamma$;\\
(iii) computability of one-to-one preimages of F\o lner sets is equivalent to solvability of the Word Problem.\\
As a consequence, for recursively presented amenable groups, the notion of computability of F\o lner sets is the only non trivial notion of effective amenability that is not characterized by solvability of the Word Problem.

 Indeed computability of F\o lner sets does not imply solvability of the Word Problem: this is the first consequence of the following  theorem, proved  by a very explicit description of the F\o lner sets of $G(M)$ (see Section \ref{K}).

\begin{theor}\label{KA}
 The Kharlampovich groups $G(M)$ have computable F\o lner sets. 
\end{theor}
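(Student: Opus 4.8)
The plan is to construct F\o lner sets explicitly from the known structure of the Kharlampovich groups $G(M)$. The key obstacle is the unsolvable Word Problem: we cannot test condition \eqref{folner} by deciding equality of words, so we must produce preimages $F \subset \mathbb{F}_X$ whose F\o lnerness is guaranteed \emph{a priori} by the algebraic construction rather than verified by computation. This forces a different strategy from the enumeration argument available in the solvable-Word-Problem case.

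\smallskip
First I would recall the concrete presentation and structure of $G(M)$ from Kharlampovich's construction (following \cite{H,KMS}). These groups are built as iterated extensions, and the natural idea is to exploit that an amenable group built by extensions admits F\o lner sets that are ``products'' of F\o lner sets of the pieces. So the plan is: identify a normal series or a suitable subgroup $N \trianglelefteq G(M)$ with quotient $Q = G(M)/N$, where both $N$ and $Q$ have an accessible description, and then combine F\o lner sets for $N$ and approximate transversals for $Q$ into F\o lner sets for $G(M)$. The technical content is that a set of the form (lift of a F\o lner set of $Q$) times (a F\o lner set of $N$) is, for appropriate relative sizes, F\o lner in $G(M)$; this is the standard permanence of amenability under extensions, but here it must be made \emph{effective}, i.e.\ the defining words in $\mathbb{F}_X$ must be writable down algorithmically.

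\smallskip
Second, and this is where the explicit description does the real work, I would use the specific feature that makes $G(M)$ amenable despite unsolvable Word Problem: these groups are solvable, so they are built from abelian (or metabelian-type) layers. For an abelian or nilpotent-by-abelian layer one can write down genuine F\o lner sets as explicit ``boxes'' $\{x_1^{a_1}\cdots x_k^{a_k} : |a_i| \le m_i\}$ in the free group, choosing the exponent bounds $m_i$ large enough (as a computable function of $n$) to force the boundary ratio below $n^{-1}$. The crucial point is that for such structured layers the F\o lnerness of the box depends only on the \emph{abelianized} combinatorics of commuting generators and on bounds for the distortion of the relevant subgroups, not on solving word equalities. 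Thus the algorithm outputs, on input $n$, an explicitly indexed finite subset of $\mathbb{F}_X$, and F\o lnerness holds by the construction even though we never decide which of these words represent equal group elements.

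\smallskip
I expect the main obstacle to be controlling the interaction between layers: when passing from a F\o lner set of a quotient to a F\o lner set of the whole group, the generators of $Q$ act on $N$ and can distort the metric, so the exponent bounds $m_i$ for the $N$-layer must be chosen depending on the $Q$-part, in a uniform and computable way. Making this dependence explicit and subrecursive — so that the box size is bounded by a recursive function of $n$ — is the heart of the argument, and it simultaneously yields the upper bounds on the F\o lner function $F_{G(M),X}$ advertised in the abstract. Once the box-within-box construction is pinned down with these computable exponent bounds, the algorithm of Definition \ref{prima} is immediate: it simply prints the indexed list of words, and correctness is guaranteed by the structural estimate rather than by any decision procedure.
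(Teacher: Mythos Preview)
Your proposal is correct and follows essentially the paper's approach: the paper uses the iterated splitting structure $G(M)=H_0^{G(M)}\rtimes(H_1^{H}\rtimes H_2)$ with abelian normal layers and builds the $n$-F\o lner set as a product of explicit cubes $C_n(\cdot)$ over commuting conjugates (Lemma~\ref{cubo} and Theorem~\ref{semiabe}), which is precisely your ``box-within-box'' idea. The only sharpening relative to your outline is that, because the extensions split and the abelian kernels are finitely generated \emph{as normal subgroups}, no distortion bound is needed---the conjugates $a^{-1}xa$ are already explicit words and the cube over $L_i^{A}$ is automatically $n^{-1}$-invariant, yielding the closed formula $C_n(L_2)\,C_p(L_1^{C_n(L_2)})\,C_p(L_0^{\,C_n(L_2)C_p(L_1^{C_n(L_2)})})$.
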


Amenability is stable under semidirect products and, more generally, under extensions: in the literature, the most common proofs of these two facts do not use the characterization of amenability by F\o lner sets. The book \cite{Coo} is one of the exceptions and it was a valuable resource for our proofs. Moreover in \cite{J} it was explicitly  shown that a F\o lner net for the semidirect product is given by the product of the F\o lner nets of the factor groups. However this does not yield an effective procedure to produce, for a fixed $n\in \mathbb N$, an $n$-F\o lner set.

After the Preliminaries, each section consists of a Theorem about the general shape of $n$-F\o lner sets of group extensions, a Corollary about computability of these F\o lner sets and a Corollary about the F\o lner functions.
We can interpret the results of this paper as stability properties of the class of groups with computable F\o lner sets and of the class of groups with subrecursive F\o lner functions.

\subsubsection*{Section \ref{K}} We consider the case of a splitting extension by an Abelian group which is finitely generated as a normal subgroup (this is the case of $G(M)$): computability of F\o lner sets and subrecursivity of F\o lner function are preserved.

\subsubsection *{Section \ref{A} } We consider general Abelian extensions: subrecursivity of F\o lner functions is preserved but we prove computability of F\o lner sets just if the quotient group has solvable Word Problem. We don't know if this hypothesis is necessary.
Asymptotically equivalent bounds for the F\o lner function of solvable groups could be also deduced from  \cite{A1, A2}, or using the comparison with the F\o lner function in free solvable groups in \cite{SC}. 

\subsubsection*{Section \ref{S}} We consider the semidirect product between two finitely generated groups: if both have computable F\o lner sets then the product has computable F\o lner sets.

\subsubsection*{Section \ref{G}} We consider an extension $\Gamma$ of a finitely generated group $K$ by a finitely generated group $N=\langle Y \rangle$. If $K$ and $N$ have subrecursive F\o lner functions and the
 \emph{distortion function} 
${\Delta_N^{\Gamma}(n):=
\max\{|\omega|_Y:\, \omega \in N,\, 
|\omega|_X \leq n\}}$  is subrecursive,
 then $\Gamma$ has subrecursive F\o lner
 function; if $N$ has computable F\o lner
 sets, $K$ is amenable with solvable word 
problem and $\Delta_N^{\Gamma}$ is subrecursive
  then $\Gamma$ has computable F\o lner sets.
	Notice that it is possible that $\Delta_N^\Gamma$
	is not subrecursive, see for example \cite{AO}, even for solvable groups, see \cite{OD}.
Again we don't know if these hypotheses are necessary.

\begin{question}${}$

\noindent
1. Have all finitely generated solvable groups computable F\o lner sets?

\noindent
2. Is computability of F\o lner sets stable under quotients?

\noindent 3. Does subrecursivity of the F\o lner function imply computability of F\o lner sets?
\end{question}
A positive answer to the third question would imply both  computability of F\o lner sets for every recursively presented amenable group and a positive answer to the second question, because subrecursivity of the F\o lner function is stable under quotients (see \cite[Lemma 2.2]{A2}); a positive answer to the second question would imply a positive answer to the first one, because free solvable groups have solvable Word Problem and therefore have computable F\o lner sets.

\subsection*{Acknowledgements}
This work started from my PhD program in Sapienza Universit\`a di Roma, under the supervision of Tullio Ceccherini-Silberstein.
Part of this work was developed during the program \textit{Measured group theory} in Vienna in 2016. 
We thank its organizers, as well as the Erwin Schr\"{o}dinger Institute for Mathematics and Physics, for the warm hospitality. We also thank gratefully Federico Berlai for an improvement in Theorem \ref{abex} and Theorem \ref{genex}, Michel Coornaert for precious comments,  Goulnara Arzhantseva and Anatoly Vershik for their interest and encouragement.

\section*{Preliminaries}

Throughout this paper $\mathfrak F\o l_{\Gamma,X}(n)$ is the family of $n$-F\o lner sets of $\Gamma$ with respect to $X$. For an element $g\in \Gamma$ we denote with $|g|$ the length with respect to $X\cup X^{-1}$ (so it is the minimal length of a word in $\mathbb F_X$ representing $g$). For a different set of generators, say $Y$, we explicitly write $|g|_Y$. For a subset $A$ we also denote with $|A|_Y$ the maximal length of the elements of $A$ with respect to $Y$. We denote by $B_n$ the ball of radius $n$ in the free group and $B_n(\Gamma):=\pi_{\Gamma,X}(B_n)$ the ball of radius $n$ of $\Gamma$.

\begin{lemma}\label{catena}
For any $F\in\mathfrak F\o l_{\Gamma, X}(n)$ and for all $g\in \Gamma$ we have: 

$$\frac{|F\setminus g F|}{|F|}\leq |g| n^{-1}.$$

\end{lemma}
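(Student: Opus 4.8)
The plan is to prove the Lemma by writing $g$ as a word in the generators and iterating the defining F\o lner estimate \eqref{folner} one letter at a time, using a telescoping (triangle-inequality-type) argument for the symmetric difference.

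First I would reduce to the generators. Fix $F\in\mathfrak F\o l_{\Gamma,X}(n)$ and $g\in\Gamma$, and write $g=x_k x_{k-1}\cdots x_1$ with each $x_i\in X\cup X^{-1}$ and $k=|g|$, a geodesic representative. The key structural fact I would establish is the \emph{subadditivity} of the quantity $|F\setminus gF|$ under multiplication: for any $g,h\in\Gamma$,
$$|F\setminus (gh)F|\leq |F\setminus gF|+|F\setminus hF|.$$
To see this, note $F\setminus (gh)F=(F\setminus gF)\cup\bigl((gF\setminus (gh)F)\cap F\bigr)$, and since left translation by $g$ is a bijection on $\Gamma$ we have $|gF\setminus g(hF)|=|F\setminus hF|$; hence the second piece has cardinality at most $|F\setminus hF|$. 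Combining gives the stated inequality.

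Next I would iterate. Applying subadditivity $k$ times to $g=x_k\cdots x_1$ yields
$$|F\setminus gF|\leq \sum_{i=1}^{k}|F\setminus x_i F|.$$
Here I must be slightly careful: the defining inequality \eqref{folner} is stated for $x\in X$, but the same bound holds for inverses, since $|F\setminus x^{-1}F|=|x F\setminus F|=|F\setminus xF|$ (again because left translation by $x$ is a bijection, so $|xF|=|F|$ and the two symmetric-difference halves have equal size). Thus each summand satisfies $|F\setminus x_i F|\leq |F|\,n^{-1}$, giving $|F\setminus gF|\leq k|F|n^{-1}=|g|\,|F|\,n^{-1}$, which is exactly the claim after dividing by $|F|$.

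The only real obstacle is making the telescoping bookkeeping precise, namely the two bijection observations: that $|F\setminus x^{-1}F|=|F\setminus xF|$ so that inverse generators obey the same bound, and that the correction term in the subadditivity step is controlled by $|F\setminus hF|$ via left-invariance of counting measure. Both are elementary consequences of the fact that left multiplication is a bijection on the group, so no genuine difficulty arises; the proof is essentially a clean induction on $|g|$.
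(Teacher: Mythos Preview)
Your proof is correct and follows essentially the same route as the paper's: telescope along a geodesic word for $g$, use that left translation preserves cardinalities, and handle inverse generators via $|F\setminus x^{-1}F|=|xF\setminus F|=|F\setminus xF|$. One minor slip: the displayed set identity $F\setminus(gh)F=(F\setminus gF)\cup\bigl((gF\setminus ghF)\cap F\bigr)$ is in general only an inclusion $\subset$ (an element of $F\setminus gF$ may well lie in $ghF$), but since only this inclusion is needed for the subadditivity inequality, the argument is unaffected.
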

\begin{proof}
At first we observe that if $F\in \mathfrak F\o l_{\Gamma, X}(n)$, for every $x\in X$ we have:
$$\frac{|F\setminus x^{-1} F|}{|F|}=\frac{|x^{-1}(xF\setminus F|)}{|F|}=\frac{|xF\setminus  F|}{|F|}=\frac{|F\setminus x F|}{|F|}\leq n^{-1}.$$

If $g=x_1\ldots x_{|g|}$, with $x_1,\ldots, x_{|g|} \in X\cup X^{-1}$,
$$(F\setminus x_1\ldots x_{|g|} F)\subset [(F\setminus x_1 F) \cup (x_1 F\setminus x_1 x_2 F)\cup  \ldots \cup (x_1\ldots x_{|g|-1} F\setminus x_1\ldots x_{|g|} F)],$$ 
and $|x_1\ldots x_{j-1} F\setminus x_1\ldots x_j F|=|F\setminus x_j F|.$
\end{proof}

\begin{propo}\label{inva}
Suppose $\Gamma$ has computable F\o lner sets with respect to $X$. Let  $Y\subset \Gamma$  be another finite generating subset. Then $\Gamma$ has computable F\o lner sets with respect to $Y$ as well.
\end{propo}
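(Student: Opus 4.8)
The plan is to reduce the $Y$-problem to the $X$-problem by exploiting the quantitative comparison of Følner conditions provided by Lemma \ref{catena}. Write $Y=\{y_1,\dots,y_k\}$ and let $L:=\max_i|y_i|$ be the maximal $X$-length of the $Y$-generators (a finite constant, since each $y_i\in\Gamma$ is represented by some word of $\mathbb F_X$). The point is that an $n$-Følner set for $X$ is automatically an $m$-Følner set for $Y$ as soon as $n\ge Lm$: indeed, if $P\subset\Gamma$ is $n$-Følner with respect to $X$, then Lemma \ref{catena} applied to each $g=y_i$ gives $\frac{|P\setminus y_iP|}{|P|}\le|y_i|\,n^{-1}\le Ln^{-1}\le m^{-1}$, which is exactly the $m$-Følner condition \eqref{folner} for $Y$.

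First I would describe the algorithm for $Y$. On input $m\in\mathbb N$, it sets $n:=Lm$ and calls the hypothetical $X$-algorithm on $n$, obtaining a finite $F\subset\mathbb F_X$ with $\pi_{\Gamma,X}(F)$ an $n$-Følner set. It remains to turn $F$ into a subset of $\mathbb F_Y$ representing the same subset of $\Gamma$. To this end fix, for each $x\in X$, a word $u_x\in\mathbb F_Y$ with $\pi_{\Gamma,Y}(u_x)=\pi_{\Gamma,X}(x)$, and let $\sigma\colon\mathbb F_X\to\mathbb F_Y$ be the homomorphism determined by $\sigma(x)=u_x$. Since $\pi_{\Gamma,Y}\circ\sigma$ and $\pi_{\Gamma,X}$ are homomorphisms agreeing on the generators of $\mathbb F_X$, they coincide; hence $\pi_{\Gamma,Y}(\sigma(F))=\pi_{\Gamma,X}(F)$. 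The algorithm outputs $F':=\sigma(F)$, a finite subset of $\mathbb F_Y$ whose computation is mere textual substitution (followed, if desired, by free reduction). By the previous paragraph $\pi_{\Gamma,Y}(F')$ equals the $n$-Følner set $\pi_{\Gamma,X}(F)$, hence is $m$-Følner for $Y$, as required by Definition \ref{prima}.

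The step I expect to be the main point to argue carefully is the legitimacy of the constant $L$ and of the substitution words $u_x$. One cannot in general compute such data at run time: computability of Følner sets does not imply a solvable Word Problem (this is precisely the content of Theorem \ref{KA}), so there is no effective way to search for, and certify, an expression of a group element in the generators. The resolution is that $L$ and the finitely many words $u_x$ (together with the words in $\mathbb F_X$ representing the $y_i$, which witness $|y_i|\le L$) form a finite amount of information depending only on the two fixed generating sets $X$ and $Y$: their existence is guaranteed because both sets generate $\Gamma$, and they may simply be hard-wired into the algorithm. Thus the construction uses only the \emph{existence}, not the computability, of a change of generators, and the whole procedure is effective.
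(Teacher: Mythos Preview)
Your proof is correct and follows the same strategy as the paper: invoke Lemma \ref{catena} to see that a sufficiently good $X$-F\o lner set is automatically $Y$-F\o lner, then push the free-group preimage through a fixed homomorphism $\sigma\colon\mathbb F_X\to\mathbb F_Y$ (the paper's $\phi$). Your explicit discussion of why the constant $L$ and the words $u_x$ may be hard-wired rather than computed at run time is a point the paper only touches on in the remark following the proposition; note also that your constant $L=\max_i|y_i|_X$ is the one that makes the appeal to Lemma \ref{catena} go through cleanly.
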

\begin{proof}
By expressing every $x\in X$ in terms of words in $Y$, we define a homomorphism ${\phi\colon \mathbb F_{X}\to \mathbb F_{Y}}$ such that the following diagram commutes:

\[
  \begin{tikzcd}
     \mathbb F_X \arrow{r}{\phi} \arrow[swap]{dr}{\pi_{\Gamma,X}} & \mathbb F_Y \arrow{d}{\pi_{\Gamma,Y}} \\ & \Gamma
  \end{tikzcd}
\]
We define the natural number $m:=|\phi(X)|_Y$ as the maximum of the word length in $\mathbb F_Y$ of the image of generators of $\mathbb F_X$. Suppose $W\subset \mathbb F_X$ is a finite subset such that $\pi_{\Gamma,X}(W)\in \mathfrak F\o l_{\Gamma,X}(mn)$; by Lemma \ref{catena} $\pi_{\Gamma,Y}(\phi(W))=\pi_{\Gamma,X}(W)\in \mathfrak F\o l_{\Gamma,Y}(n)$. Combining the algorithm of Definition \ref{prima} for $X$, 
with the algorithm computing the homomorphism $\phi$, we deduce the existence of the desired algorithm for $Y$.
\end{proof}
\begin{remark}
In finitely presented case (by Tietze transformations), or, more generally, whenever we can write the old generators in terms of the new ones, we can explicitely update the algorithm of Definition \ref{prima} for the new generators.
\end{remark}

\begin{lemma}\label{conn}
If $\Gamma$ is amenable then there exists $F\in \mathfrak F\o l_{\Gamma,X}(n)$ such that $|F|\leq F_{\Gamma, X}(|X|n)$ and $F\subset B_{|F|}(\Gamma).$
\end{lemma}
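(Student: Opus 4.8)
The plan is to start from a minimal-size Følner set at scale $|X|n$, then ``connect'' it by passing to a single connected component of the Cayley graph, and finally relocate it near the identity by a right translation. By definition of the Følner function there is an $(|X|n)$-Følner set $E$ with $|E|=F_{\Gamma,X}(|X|n)$. Summing the defining inequality \eqref{folner} over the $|X|$ generators gives the total boundary bound $\sum_{x\in X}|E\setminus xE|\le |X|\cdot(|X|n)^{-1}|E|=n^{-1}|E|$; the factor $|X|$ in the statement is exactly what is needed to make this sum small.

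Next I would decompose $E$ into its connected components $E=\bigsqcup_i C_i$ with respect to the Cayley graph of $\Gamma$ with respect to $X$, in which $f$ and $xf$ are adjacent for $x\in X$. The key observation is that the boundary splits exactly over components: if $e\in C_i$ and $x^{-1}e\in E$, then $e$ and $x^{-1}e$ are joined by an edge and hence lie in the same component, so $x^{-1}e\in C_i$. Consequently $|E\setminus xE|=\sum_i|C_i\setminus xC_i|$ for every $x$, and summing over $x$ yields $\sum_i\bigl(\sum_x|C_i\setminus xC_i|\bigr)=\sum_x|E\setminus xE|\le n^{-1}|E|=n^{-1}\sum_i|C_i|$. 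Since a weighted average cannot be exceeded by all of its terms, there is an index $j$ with $\sum_x|C_j\setminus xC_j|\le n^{-1}|C_j|$. As each single summand is bounded by the whole sum, $|C_j\setminus xC_j|\le n^{-1}|C_j|$ for every $x\in X$, so $C_j$ is $n$-Følner, and clearly $|C_j|\le|E|=F_{\Gamma,X}(|X|n)$.

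Finally I would move $C_j$ to the identity. Right translation preserves both the Følner property, since $|(C_jg)\setminus x(C_jg)|=|(C_j\setminus xC_j)g|=|C_j\setminus xC_j|$, and adjacency in the Cayley graph, hence connectedness. Fixing any $c_0\in C_j$ and setting $F:=C_jc_0^{-1}$, the set $F$ is an $n$-Følner set of the same cardinality, connected, and containing the identity. A connected subset of the Cayley graph of size $m$ containing $e$ has graph-diameter at most $m-1$, so every element of $F$ has word length at most $|F|-1$; hence $F\subset B_{|F|}(\Gamma)$, which completes the argument.

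I expect the main obstacle to be the combinatorial heart of the second paragraph: establishing the exact additivity $|E\setminus xE|=\sum_i|C_i\setminus xC_i|$ over connected components, and then passing from the single summed bound to a per-generator bound valid simultaneously for all $x\in X$. Everything else---the choice of $E$, the averaging step, and the translation to the identity---is routine once this additivity is in place.
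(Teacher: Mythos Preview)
Your proof is correct. Both your argument and the paper's hinge on the same two ideas---extract a connected F\o lner set and right-translate it to contain the identity---but the technical route to connectedness differs. The paper introduces an auxiliary ``inner boundary'' F\o lner class $\mathfrak F\o l'_{\Gamma,X}(n)=\{F:|\partial_X F|/|F|\le n^{-1}\}$, invokes (with a reference to \cite{CAV}) the fact that a minimal-cardinality element of this class is connected, and then uses the sandwich $\mathfrak F\o l_{\Gamma,X}(|X|n)\subset\mathfrak F\o l'_{\Gamma,X}(n)\subset\mathfrak F\o l_{\Gamma,X}(n)$ to transfer both the size bound and the F\o lner property. You instead stay with the original F\o lner condition, decompose a minimal $(|X|n)$-F\o lner set into connected components, prove the additivity $|E\setminus xE|=\sum_i|C_i\setminus xC_i|$ directly, and extract a good component by averaging. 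Your version is fully self-contained and makes the role of the factor $|X|$ transparent (it is exactly what allows the sum over generators to be bounded by $n^{-1}|E|$); the paper's version is terser but offloads the connectedness-of-minimal-sets step to an external reference.
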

\begin{proof}
We define $$\mathfrak F\o l'_{\Gamma,X}(n):=\{F, \mbox{ non-empty finite subset of  } \Gamma:\;\;  \frac{|\partial_X F|}{|F|}\leq \frac{1}{n} \},$$ where 
$\partial_X F:=\{f\in F: \exists \, x\in X: xf\notin F\}.$ 

It is known and easy to see that if $F'$ is of minimal cardinality in $\mathfrak F\o l'_{\Gamma,X}(n)$ (\emph{optimal F\o lner set}) then it is connected as subgraph of the right Cayley graph of $\Gamma$ with respect to $X$ (see \cite{CAV} for details). In particular for $f\in F'$ we have that $1_{\Gamma}\in F:=F'f^{-1}$ and  $F\subset B_{|F|}(\Gamma)$ and $F\in \mathfrak F\o l'_{\Gamma,X}(n)$.
Finally since:
$$ \mathfrak F\o l_{\Gamma,X}(|X|n) \subset \mathfrak F\o l'_{\Gamma,X}(n)\subset \mathfrak F\o l_{\Gamma,X}(n)$$
we have that $F\in \mathfrak F\o l_{\Gamma,X}(n)$ and $|F|\leq F_{\Gamma, X}(|X|n)$.

\end{proof}
\begin{definition}

Let $y_1, y_2, \ldots , y_{s}$ be pairwise commuting elements of $\Gamma$, not ne\-ces\-sa\-ri\-ly distinct. Set: 
$$C_n(y_1,y_2,\ldots , y_{s}):=\{y_1^{i_1}y_2^{i_2}\ldots y_{s}^{i_{s}}:\;\;\;i_1,i_2,\dots, i_{s}\in\{0,1,\ldots, n-1\}\}.$$
\end{definition}

\begin{lemma}\label{cubo}

\[\frac{|C_n(y_1,y_2,\ldots,  y_{s})\setminus y_j C_n(y_1,y_2,\ldots,  y_{s})|}{|C_n(y_1,y_2,\ldots,  y_{s})|}\leq n^{-1},\;\;\;\forall j\in \{1,2,\ldots,  d\}.\]
\end{lemma}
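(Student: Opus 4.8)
The plan is to exploit that $y_1,\dots,y_s$ generate an abelian subgroup $A\le\Gamma$, so that $C_n=C_n(y_1,\dots,y_s)$ is the image in $A$ of the combinatorial cube $\{0,\dots,n-1\}^s$ under the (not necessarily injective) evaluation map $(i_1,\dots,i_s)\mapsto y_1^{i_1}\cdots y_s^{i_s}$. If this map were injective the claim would be immediate, since in $\mathbb Z^s$ translating by a unit vector removes exactly the face $\{i_j=0\}$, of relative size $n^{-1}$; the whole difficulty is that the $y_k$ need not be distinct and may satisfy relations, so $C_n$ can collapse and overlaps must be controlled. To do this I would fix $j$ and cut everything along the cosets of the cyclic subgroup $H:=\langle y_j\rangle$.

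First I would observe that left multiplication by $y_j$ preserves each coset $cH$, since $y_j\in H$. Hence, writing $S_c:=C_n\cap cH$, one checks that $y_jC_n\cap cH=y_jS_c$ and therefore $(C_n\setminus y_jC_n)\cap cH=S_c\setminus y_jS_c$. As the cosets partition $A$, this reduces the statement to the one-dimensional estimate $|S_c\setminus y_jS_c|\le n^{-1}|S_c|$ for every coset $cH$.

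Next I would describe $S_c$ explicitly inside $cH$. Pulling $y_j$ to the front by commutativity, every element of $C_n$ has the form $y_j^{\,i}\,p$ with $i\in\{0,\dots,n-1\}$ and $p$ in the cube in the remaining directions $P:=\{\prod_{k\ne j}y_k^{i_k}:i_k\in\{0,\dots,n-1\}\}$, and the coset of $y_j^{\,i}p$ is $pH$. Identifying $cH$ with $H=\langle y_j\rangle$ (so with $\mathbb Z$ or with $\mathbb Z/d$, where $d=\mathrm{ord}(y_j)$) and letting multiplication by $y_j$ act as the shift $t\mapsto t+1$, each $p\in P$ lying in $cH$ contributes the length-$n$ interval $\{t_p,t_p+1,\dots,t_p+n-1\}$, and $S_c$ is the union of these intervals.

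Finally I would run the standard boundary count for a union of intervals. After merging overlaps, $S_c$ is a disjoint union of maximal runs, each of length at least $n$ because it is built from full length-$n$ blocks; the set $S_c\setminus y_jS_c$ consists of exactly the left endpoint of each maximal run, so its cardinality is the number $m$ of runs while $|S_c|\ge mn$, giving $|S_c\setminus y_jS_c|=m\le n^{-1}|S_c|$. The only degenerate case is when the intervals wrap all the way around $\mathbb Z/d$ (in particular when $y_j$ has finite order $d\le n$): then $S_c$ is the whole coset, which is $y_j$-invariant, so $S_c\setminus y_jS_c=\varnothing$ and the bound holds trivially. Summing over the cosets yields the claim. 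The main obstacle, and the reason for the coset decomposition, is precisely keeping track of these collapses and wrap-arounds coming from possible coincidences and relations among the $y_k$.
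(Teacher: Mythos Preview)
Your proof is correct, but it follows a different route from the paper's. You slice $C_n$ into its intersections with cosets of the cyclic subgroup $H=\langle y_j\rangle$, identify each slice with a subset of $\mathbb Z$ or $\mathbb Z/d$, and then run a one--dimensional interval--boundary argument: the slice is a union of length-$n$ blocks, so each maximal run has length $\ge n$, and $S_c\setminus y_jS_c$ is exactly the set of left endpoints. Summing over cosets gives the bound. The paper instead argues globally: after reducing to $j=1$ it observes that $C_n\setminus y_1C_n\subset C_n(y_2,\dots,y_s)$ (since $C_n(y_1)\setminus y_1C_n(y_1)\subset\{1_\Gamma\}$), and then checks that the $n$ translates $y_1^k[C_n\setminus y_1C_n]$, $k=0,\dots,n-1$, are pairwise disjoint subsets of $C_n$, whence $n\,|C_n\setminus y_1C_n|\le |C_n|$. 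Your coset decomposition makes the geometry transparent and localises the possible collapses, at the cost of having to treat the wrap-around case $d\le n$ separately; the paper's translate-packing argument is shorter and handles the finite-order degeneracy implicitly (the containment $C_n\setminus y_1C_n\subset C_n(y_2,\dots,y_s)$ already absorbs it), but is perhaps less visually obvious. Either way the content is the same inequality $|C_n\setminus y_jC_n|\le n^{-1}|C_n|$, and both arguments are robust against repetitions and relations among the $y_k$.
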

\begin{proof}
Since all elements $y_1, y_2, \ldots , y_{s}$ commute we prove, without loss of generality,  the statement for $j=1$.\\
At first, we observe that
$C_n(y_1,y_2,\ldots , y_{s})=C_n(y_1) C_n(y_2,y_3,\ldots , y_{s})$
and 
\begin{equation*}
C_n(y_1)\setminus y_1 C_n(y_1)=\begin{cases}
\emptyset \; \mbox{ if $y_1$ has order less than or equal to } n \\
\{1_\Gamma \}\;\mbox{ otherwise. }
\end{cases}
\end{equation*}
Writing $C_n$ instead of $C_n(y_1,y_2,\ldots , y_{s})$ we have that $$C_n \setminus y_1 C_n\subset C_n(y_2,y_3,\ldots,y_{s}),$$ because $C_n \setminus y_1 C_n\subset [C_n(y_1)\setminus y_1 C_n(y_1)] C_n(y_2,y_3,\ldots,y_{s})$.

Now we show that $C_n$ contains $n$ disjoint translations of $C_n\setminus y_1 C_n$, precisely:
\begin{equation}\label{partizione}
C_n\supset\bigsqcup^{n-1}_{k=0} y^k [C_n \setminus y_1 C_n].\end{equation}
 At first$$y_1^k [C_n \setminus y_1 C_n]\subset y_1^k C_n(y_2,y_3,\ldots,y_{s})\subset C_n,\;\;\; \forall k\in\{0,1,\ldots , n-1\};$$
in particular if $g\in y_1^k [C_n \setminus y_1 C_n]$ there exist $\widehat{i}_2,\ldots , \widehat{i}_{s}\in \{0,1,\ldots , n-1\} $ such that ${g=y_1^k y_2^{\widehat{i}_2}\ldots y_{s}^{\widehat{i}_{s}}}$. If $k\neq 0$ then $g\notin C_n\setminus y_1 C_n$, this implies:

 $$y_1^k [C_n\setminus y_1 C_n]\cap  [C_n\setminus y_1 C_n]=\emptyset, \;\;\;\forall k \in \{1,\ldots , n-1\}.$$ 

Thus $\{y_1^k [C_n\setminus y_1 C_n]\}_{k=0,\ldots , n-1}$ are disjoint sets and \eqref{partizione} is proved and therefore we deduce $\frac{|C_n\setminus y_1 C_n|}{|C_n|}\leq n^{-1}$.

\end{proof}

For a finite subset $Y\subset \Gamma$
 we may have different finite enumerations
 of $Y$, for example we consider $W,W'\subset \mathbb F_{ X}$,
 $W=\{w_1,\ldots , w_t\}$ and 
$W'=\{w'_1,\ldots , w'_{t'}\}$
 such that $\pi_{\Gamma,X}(W)=\pi_{\Gamma,X}(W')=Y$. In general,
$C_n(\pi_{\Gamma,X}(w_1),\ldots , \pi_{\Gamma,X}(w_t))\neq C_n(\pi_{\Gamma,X}(w'_1),\ldots , \pi_{\Gamma,X}(w'_{t'})) $ in $\Gamma$
 but these subsets are  both $n^{-1}$-invariant by left multiplication by every element
 $y\in Y$, by virtue of Lemma \ref{cubo}. By abuse of notation we simply write $C_n(Y)$ instead of $C_n(\pi(w_1),\ldots , \pi(w_t))$ when the choice of the finite preimage $W$ of $Y$ is irrelevant.\\
 Moreover, when the generating subset $X\subset \Gamma$ is clear from the context, we shall simply write $F_\Gamma$ (resp. $\mathfrak F\o l_{\Gamma}$) instead of $F_{\Gamma,X}$ (resp. $\mathfrak F\o l_{\Gamma,X}$).

\section{Kharlampovich groups}\label{K}

\begin{theorem}\label{semiabe}
Let $\Gamma=\langle L_1\cup L_2 \rangle$ be a finitely generated group, $L_1$ and $L_2$ two finite disjoint subsets and respectively $H_1$ and $H_2$ the subgroups that they generate.
Suppose that $H_2$ is amenable, $H_1^{\Gamma}$ is Abelian and $\Gamma= H_1^{\Gamma}\rtimes H_2$, then:
$$ A C_n(L_1^A)\in \mathfrak F\o l_{\Gamma}(n),\;\;\;\;\forall A\in \mathfrak F\o l_{H_2}(n).$$
where $L_1^A=\{a^{-1}xa:\;a\in A, x\in L_1\}.$
\end{theorem}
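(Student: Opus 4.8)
The plan is to work in the normal form coming from the semidirect decomposition and to reduce the Følner condition, generator by generator, to two estimates already available: the Følner property of $A$ in $H_2$, and the cube estimate of Lemma \ref{cubo}. Throughout I would write $N := H_1^\Gamma$ (normal and Abelian by hypothesis), $H := H_2$, $C := C_n(L_1^A)$ and $F := A\,C$. First I record the structural facts to be used: since $H_1 \subseteq N$ we have $L_1 \subset N$, and every conjugate $a^{-1}xa$ with $a \in A \subset H$, $x \in L_1$ again lies in the Abelian normal subgroup $N$; hence $L_1^A$ is a finite set of pairwise commuting elements and $C = C_n(L_1^A)$ is well defined with $C \subset N$. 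Using uniqueness of the decomposition $\Gamma = N \rtimes H$ (the $H$-component of any product $ac$ with $a \in A$, $c \in C \subset N$ is exactly $a$), I would verify that $F = \bigsqcup_{a \in A} aC$ is a disjoint union, so that $|F| = |A|\,|C|$, and that translation fibers sitting over distinct elements of $H$ never interfere.

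Next I would check $|F\setminus xF|/|F| \le n^{-1}$ separately for the two kinds of generator. For $x \in L_2 \subset H$ the multiplication stays ``horizontal'': $xF = (xA)C$, and comparing $H$-components shows $ac \in xF$ iff $a \in xA$, whence $F \setminus xF = (A\setminus xA)\,C$ and the ratio equals $|A\setminus xA|/|A| \le n^{-1}$, since $A \in \mathfrak F\o l_{H_2}(n)$ and $x \in L_2$ is a generator of $H_2$.

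The genuinely delicate case is $x \in L_1 \subset N$, and this is where I expect the main work to lie. Here $x$ cannot be commuted past $A$ directly; instead I would conjugate, writing $xa = a\,(a^{-1}xa) = a\,x^a$ with $x^a := a^{-1}xa \in L_1^A$. This turns the single left translation by $x$ into a family of fiberwise translations, $xF = \bigsqcup_{a\in A} a\,(x^a C)$. Matching fibers over each $a \in A$ then yields $F\setminus xF = \bigsqcup_{a \in A} a\,(C \setminus x^a C)$, so that $|F\setminus xF| = \sum_{a\in A}|C\setminus x^a C|$. The whole point of the definition of $L_1^A$ is precisely that every such $x^a$ is one of the generators of the cube $C = C_n(L_1^A)$; hence Lemma \ref{cubo} applies termwise and gives $|C\setminus x^a C| \le n^{-1}|C|$. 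Summing over $a$ gives $|F\setminus xF| \le n^{-1}|A|\,|C| = n^{-1}|F|$, finishing this case and the proof.

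The main obstacle is thus not a long computation but the conjugation bookkeeping in the second case: one must realize that absorbing a left multiplication by a normal generator forces conjugation by the $H$-part, which is exactly why the cube must be built on $L_1^A$ rather than on $L_1$, so that it is invariant under all the conjugates $x^a$ that actually occur. As a final check I would confirm that a possible collapse in $C_n$ (repeated or dependent $y_j$) does not break the fiberwise disjointness, since the decomposition over $H$ and the number $|C|$ are used only as the literal cardinality of the subset $C \subset N$.
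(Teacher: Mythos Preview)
Your proposal is correct and follows essentially the same approach as the paper: split into the cases $x\in L_2$ (use that $A$ is F\o lner in $H_2$) and $x\in L_1$ (conjugate $x$ past $a$ and apply Lemma~\ref{cubo} to $C_n(L_1^A)$), with the semidirect decomposition giving $|AC|=|A||C|$. Your fiberwise bookkeeping in the $L_1$ case in fact yields the exact equality $F\setminus xF=\bigsqcup_{a\in A}a(C\setminus x^aC)$, where the paper is content with the corresponding inequality, but this is only a presentational difference.
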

\begin{proof}
Set $B:=C_n(L_1^A)$, and observe that $|AB|=|A||B|$ since $A\subset H_2$ and $B\subset H_1^\Gamma$ and $H_2\cap H_1^{\Gamma}=\{1_\Gamma\}$.
\\For $x\in L_2$ we have:
$$\frac{|AB\setminus xAB|}{|AB|}\leq\frac{|A\setminus xA||B|}{|A||B|}\leq n^{-1},$$
For $x\in L_1$,  using Lemma \ref{cubo}, we have:

\vspace{3mm}

\noindent
$
 \dfrac{|AB\setminus xAB|}{|AB|}=\dfrac{|\{ab:\; a\in A,\; b\in B : ab\notin xAB\}|}{|A||B|} =$
 
 \vspace{3mm}
 \noindent
 $ = \dfrac{|\{ab:\; a\in A,\; b\in B : b\notin a^{-1}xAB\}|}{|A||B|}\leq  \dfrac{|\{ab:\;\; a\in A,\; b\in B : b\notin a^{-1}xaB\}|}{|A||B|}\leq $
 
 \vspace{3mm}
 \noindent
 $ \leq  \dfrac{|\bigcup_{a\in A} a(B\setminus a^{-1}xa B)|}{|A||B|}\leq n^{-1} $ (since $a^{-1}xa\in L_1^A$ and $B=C_n(L_1^A)$).
 \end{proof}

Consider the description of a Kharlampovich group $G(M)$ given in \cite{KMS}, with $M$ a Minsky machine with unsolvable halting problem and $p$ a fixed prime, using the same notation of \cite{KMS}, we have:
\begin{itemize}
\item $H_j:=\langle L_j \rangle,\; j=0,1,2, \;	\mbox{ and } H:=\langle L_1\cup L_2 \rangle,$
\item $H_j$ is abelian, \; $j=0,1,2$; 
\item $H_0,H_1$ are of exponent $p$;
\item $H_1^H$ is abelian of exponent $p$;
\item $H=H_1^{H}\rtimes H_2$;
\item $H_0^{G(M)}$ is abelian of exponent $p$;
\item $G(M)={H_0}^{G(M)}\rtimes H.$
\end{itemize}

Then by Theorem \ref{semiabe} we have:

$$C_n(L_2)\in \mathfrak F\o l_{H_2}(n),$$
$$C_n(L_2)C_n(L_1^{C_n(L_2)})\in \mathfrak F\o l_{H}(n),$$
since $H_1^H$ is of exponent $p$, then for $n\geq p$ we have $C_n=C_p$ in $H_1^{H}$ and the same holds in $H_0^{G(M)}$, thus:
\vspace{3mm}
$$C_n(L_2)C_p(L_1^{C_n(L_2)})C_p(L_0^{C_n(L_2)C_p(L_1^{C_n(L_2)})})\in \mathfrak F\o l_{G(M)}(n).$$
\vspace{3mm}

 The groups $G(M)$  have computable F\o lner sets: we have an algorithm with input $n$ and output a finite subset of the free group projecting onto an $n$-F\o lner set in $G(M)$. The Theorem \ref{KA} of Introduction is proved. Moreover, we have a bound from above for the cardinality of the smallest F\o lner sets for $G(M)$.
\begin{corollary}
The class of finitely presented groups with computable F\o lner sets is larger than the class of finitely presented amenable groups with solvable Word Problem.
\end{corollary}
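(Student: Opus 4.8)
The plan is to read ``larger than'' as a proper inclusion and to establish it by proving containment in one direction and exhibiting a separating example in the other.

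First I would dispatch the containment. Every finitely presented amenable group $\Gamma$ with solvable Word Problem has computable F\o lner sets, by the elementary semi-algorithm recalled in the Introduction: fixing a finite generating set $X$ and an input $n$, one enumerates the finite subsets of $\mathbb F_X$ and, using the solution to the Word Problem, tests condition \eqref{folner} on each in turn; since $\Gamma$ is amenable, the preimage of some $n$-F\o lner set eventually appears, so the search terminates and returns it. This shows that the class of finitely presented amenable groups with solvable Word Problem sits inside the class of finitely presented groups with computable F\o lner sets.

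The strictness is then immediate from Theorem \ref{KA}. The Kharlampovich groups $G(M)$ are finitely presented and solvable (hence amenable), and the explicit sets constructed above via Theorem \ref{semiabe} show they have computable F\o lner sets; yet by Kharlampovich's construction they have unsolvable Word Problem. Thus $G(M)$ belongs to the larger class but not to the smaller one, and the inclusion is proper.

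No genuine obstacle remains at this stage: the substantive work is precisely the explicit F\o lner description underlying Theorem \ref{KA}, already carried out by applying Theorem \ref{semiabe} along the tower $H_2 \le H \le G(M)$. The only point demanding attention is the word ``larger,'' which must be understood as strict inclusion---and it is exactly to secure strictness that one combines the routine enumeration argument (for the easy containment) with the nontrivial computability of F\o lner sets for $G(M)$ (for the separating witness).
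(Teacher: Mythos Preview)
Your proposal is correct and matches the paper's intended argument: the corollary is stated without proof precisely because it follows immediately from combining the enumeration argument in the Introduction (solvable Word Problem plus amenability yields computable F\o lner sets) with Theorem~\ref{KA} applied to the finitely presented, solvable, Word-Problem-unsolvable groups $G(M)$. Your reading of ``larger'' as proper inclusion, and your two-step justification of containment plus separating witness, is exactly what the paper has in mind.
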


\begin{corollary}
$$F_{G(M)}(n)\leq n^{|L_2|}p^{|L_1| n^{|L_2|}}p^{|L_0| n^{|L_2|}p^{|L_1| n^{|L_2|}}}.$$
\end{corollary}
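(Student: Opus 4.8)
The plan is to observe that, by Vershik's definition, $F_{G(M)}(n)$ is the minimal cardinality of an $n$-F\o lner set of $G(M)$, so it suffices to estimate the cardinality of the explicit $n$-F\o lner set
$$F:=C_n(L_2)\,C_p(L_1^{C_n(L_2)})\,C_p(L_0^{C_n(L_2)C_p(L_1^{C_n(L_2)})})$$
produced just above. The whole computation reduces to two elementary facts: first, that any cube is small, namely $|C_m(z_1,\ldots,z_s)|\leq m^s$, since $C_m(z_1,\ldots,z_s)$ is the image of $\{0,\ldots,m-1\}^s$ under the map $(i_1,\ldots,i_s)\mapsto z_1^{i_1}\cdots z_s^{i_s}$; and second, that the three layers of $F$ lie in subgroups meeting trivially, so that the cardinalities of the layers multiply.

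First I would bound each layer from the bottom up. Since $L_2$ has $|L_2|$ elements we get $|C_n(L_2)|\leq n^{|L_2|}$. The conjugate set $L_1^{C_n(L_2)}=\{a^{-1}xa: a\in C_n(L_2),\,x\in L_1\}$ has at most $|L_1|\,|C_n(L_2)|\leq |L_1|\,n^{|L_2|}$ elements, hence $|C_p(L_1^{C_n(L_2)})|\leq p^{|L_1|n^{|L_2|}}$. Using again that $C_n(L_2)\subset H_2$ and $C_p(L_1^{C_n(L_2)})\subset H_1^{H}$ together with $H=H_1^{H}\rtimes H_2$ (exactly as in the proof of Theorem \ref{semiabe}), the product of these two factors has cardinality at most $n^{|L_2|}p^{|L_1|n^{|L_2|}}$. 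Consequently the set over which $L_0$ is conjugated has at most $|L_0|\,n^{|L_2|}p^{|L_1|n^{|L_2|}}$ elements, so the topmost cube $C_p(L_0^{\ldots})$ satisfies $|C_p(L_0^{\ldots})|\leq p^{|L_0|\,n^{|L_2|}p^{|L_1|n^{|L_2|}}}$.

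Finally I would assemble the estimate: since the first two layers lie in $H$ while the last lies in $H_0^{G(M)}$, and $G(M)=H_0^{G(M)}\rtimes H$ forces $H\cap H_0^{G(M)}=\{1_{G(M)}\}$, the map $(h,h_0)\mapsto h\,h_0$ is injective and $|F|$ equals the product of the three layer cardinalities, which is exactly the claimed bound. The only real care needed is the bookkeeping of the conjugate generating sets $L_1^{C_n(L_2)}$ and $L_0^{\,\ldots}$, whose sizes are precisely what produce the iterated exponentials; the multiplicativity of cardinalities across layers is then immediate from the semidirect-product decompositions already exploited in Theorem \ref{semiabe}.
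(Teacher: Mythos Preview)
Your proposal is correct and follows exactly the approach the paper intends: bound the cardinality of the explicit $n$-F\o lner set $C_n(L_2)\,C_p(L_1^{C_n(L_2)})\,C_p(L_0^{\ldots})$ via $|C_m(z_1,\dots,z_s)|\le m^s$ and the trivial inequality $|AB|\le|A||B|$. One small quibble: in your last paragraph you say $|F|$ \emph{equals} the product of the three layer cardinalities, ``which is exactly the claimed bound''; the injectivity does give equality with the product of the actual cardinalities, but that product is only \emph{at most} the right-hand side (each cube may be strictly smaller than $m^s$), and in any case the injectivity is unnecessary since the inequality $|ABC|\le|A||B||C|$ already suffices for the upper bound.
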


\section{Abelian extension}\label{A}

We consider now the general Abelian extensions: a priori the procedure doesn't ensure computability of the F\o lner sets in every case.

\begin{theorem}\label{abex}
Let $\Gamma$ be finitely generated by $X$. Suppose $N \triangleleft \Gamma$ is an Abelian normal subgroup and denote by $\rho: \Gamma \rightarrow \Gamma / N$ the canonical projection. Then
$$A C_{2n |A|^2}(A^{-1}XA\cap N)\in  \mathfrak F\o l_{\Gamma, X}(n),$$
for each finite $ A\subset \Gamma$ such that $|A|=|\rho(A)|$ and $\rho(A) \in \mathfrak F\o l_{\Gamma/N, \rho(X)}(2n).$
\end{theorem}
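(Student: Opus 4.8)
The plan is to follow the template of Theorem \ref{semiabe}, viewing $A$ as a lift of the quotient F\o lner set $\rho(A)$ and $B:=C_{2n|A|^2}(A^{-1}XA\cap N)$ as the ``fibre'' sitting inside the abelian kernel $N$; I will show that $F:=AB$ does the job. First I would record that the product $F=AB$ is a disjoint union over $A$: if $ab=a_1b_1$ with $a,a_1\in A$ and $b,b_1\in B\subset N$, then applying $\rho$ gives $\rho(a)=\rho(a_1)$, so $a=a_1$ by the hypothesis $|A|=|\rho(A)|$, and hence $b=b_1$; thus $|F|=|A|\,|B|$. Note also $F\neq\emptyset$ since $1_\Gamma\in B$.

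Fix a generator $x\in X$; the task is to bound $|F\setminus xF|$ by $n^{-1}|A|\,|B|$. I would partition $A=A_1\sqcup A_2$, where $A_1:=\{a\in A:\rho(a)\in\rho(x)\rho(A)\}$ and $A_2:=A\setminus A_1$. Since $\rho$ restricts to a bijection $A\to\rho(A)$, the set $A_2$ corresponds exactly to $\rho(A)\setminus\rho(x)\rho(A)$, so the hypothesis $\rho(A)\in\mathfrak F\o l_{\Gamma/N,\rho(X)}(2n)$ yields $|A_2|\le (2n)^{-1}|A|$. For $a\in A_2$ every element $ab$ satisfies $\rho(ab)=\rho(a)\notin\rho(x)\rho(A)=\rho(xF)$, hence $ab\notin xF$; these account for exactly $|A_2|\,|B|$ elements of $F\setminus xF$.

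The core of the argument is the fibrewise analysis for $a\in A_1$, and this is the step I expect to be the main obstacle. For such $a$ there is, by injectivity of $\rho|_A$, a \emph{unique} partner $a'\in A$ with $\rho(xa')=\rho(a)$; writing $\nu:=(xa')^{-1}a\in N$, a short computation with $\rho$ shows that for $b\in B$ one has $ab\in xF$ if and only if $\nu b\in B$, so the number of ``bad'' $b$ is exactly $|B\setminus\nu^{-1}B|$. The point is that $\nu^{-1}=a^{-1}xa'$ lies in $A^{-1}XA\cap N$, i.e.\ it is one of the commuting generators defining the cube $B=C_{2n|A|^2}(A^{-1}XA\cap N)$; Lemma \ref{cubo} then gives $|B\setminus\nu^{-1}B|\le (2n|A|^2)^{-1}|B|$ for each $a\in A_1$.

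Summing the two contributions and using $|A_1|\le|A|$, \[\frac{|F\setminus xF|}{|F|}\le\frac{|A_2|\,|B|+|A_1|(2n|A|^2)^{-1}|B|}{|A|\,|B|}\le\frac{1}{2n}+\frac{|A_1|}{2n|A|^2}\le\frac{1}{2n}+\frac{1}{2n}=\frac1n.\] As this holds for every $x\in X$, the set $F=A\,C_{2n|A|^2}(A^{-1}XA\cap N)$ is $n$-F\o lner, as claimed. The delicate points to get right are the verification that $\nu^{-1}$ genuinely belongs to the generating set $A^{-1}XA\cap N$, so that Lemma \ref{cubo} applies to it directly rather than only through Lemma \ref{catena} with a length estimate, and the bookkeeping ensuring that the quotient error $(2n)^{-1}$ and the kernel error coming from $B$ add up to at most $n^{-1}$ --- which is precisely why the quotient set is required to be $2n$-F\o lner rather than merely $n$-F\o lner.
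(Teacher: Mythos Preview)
Your proof is correct and follows essentially the same route as the paper: split the boundary according to whether $\rho(a)\in\rho(x)\rho(A)$ or not, handle the first part with the $2n$-F\o lner property of the quotient, and for the second part recognise that $a^{-1}xa'\in A^{-1}XA\cap N$ so that Lemma~\ref{cubo} applies directly. Your bookkeeping is in fact slightly tighter than the paper's --- you use the uniqueness of the partner $a'$ (via injectivity of $\rho|_A$) to get an exact fibrewise count $|B\setminus\nu^{-1}B|$ summed over $a\in A_1$, whereas the paper bounds the same quantity more coarsely by summing over all elements of $S_x=A^{-1}xA\cap N$ and invoking $|S_x|\le|A|^2$; but this difference is cosmetic and both lead to the same $\frac{1}{2n}+\frac{1}{2n}$ conclusion.
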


\begin{proof}
Consider the finite set $S:=A^{-1}XA\cap N$ and, for each $x\in X$, the finite set $S_x:=A^{-1}xA\cap N$. We clearly have $|S|\leq |A|^2|X|$ and $|S_x|\leq |A|^2$.\\
Set $B:=C_{2n|A|^2}(S)\subset N$. Then by Lemma \ref{cubo} we have $\frac{|B\setminus s B|}{|B|}\leq (2n|A|^2)^{-1} \mbox{ for all } s\in S$;\\
thus for any $s\in S_x$, for any $x\in X$
\begin{equation}\label{1stella}
\frac{|B\setminus s B|}{|B|}\leq (2n|S_x|)^{-1}.
\end{equation}

Consider the set $F:=AB\subset \Gamma$ and notice that $|F|=|A||B|$ because the intersection $A\cap B$ has at most one element since $\rho_{|_A}$ is injective and $\rho$ sends $B$ to the identity of $\Gamma / N$.
So for $g\in F$ we write $g=ab$, $a\in A$, $b\in B$ in a unique way (again because $\rho_{|_A}$ is injective and $\rho(g)=\rho(a)$) and we write $A':=\rho(A)\subset \Gamma/N$, recall that this is $2n$-F\o lner in $\Gamma/N$.
\\For each $x\in X$, the set $F\setminus xF$ is the disjoint union of the subsets:
$$E^x_1=\{g\in F\setminus xF: \rho(g)\notin \rho(x) A'\}$$
$$E^x_2=\{g\in F\setminus xF: \rho(g)\in \rho(x) A'\}.$$
If $g=ab\in E^x_1$, since $\rho(g)=\rho(a)\notin \rho(x) A'$ we have $\rho(a)\in A'\setminus \rho(x) A'$. But $\rho$ is injective on $A$ then:
\begin{equation}\label{2stelle}
\frac{|E^x_1|}{|F|}=\frac{ |A'\setminus \rho(x) A'||B|}{|A||B|}\leq (2n)^{-1}.
\end{equation}

If $g=ab\in E^x_2$ then $\rho(a)\in\rho(x)A'=\rho(xA)$. Hence there exist $a'\in A, s\in N$ such that $as=xa'$. It follow that $s=a^{-1}xa'$ and $s\in S_x$. Now $g=xa's^{-1}b$,
and since $g\notin xF=xAB$ we necessarily have $b\notin sB$. Thus we have
$$\frac{|E^x_2|}{|F|}\leq\frac{|\{xa's^{-1}b,\,\, a'\in A,\,\, s\in S_x,\,\,b\in B\setminus sB\}|}{|A||B|}\leq \! \sum_{s\in S_x}\!\!\frac{|B\setminus sB|}{|B|}$$
And by \eqref{1stella}:
\begin{equation}\label{3stelle}
\frac{|E^x_2|}{|F|}\leq (2n)^{-1}.
\end{equation}

Combining \eqref{2stelle} and \eqref{3stelle} we deduce that $\frac{|F\setminus xF|}{|F|}=\frac{|E^x_1|}{|F|}+\frac{|E^x_2|}{|F|}\leq n^{-1}$, for any $x \in X$.
\end{proof}

\begin{corollary}
A finitely presented group which is the extension of an amenable group with solvable Word Problem by an Abelian group has computable F\o lner sets.
\end{corollary}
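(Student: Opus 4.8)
The plan is to turn the explicit $n$-F\o lner set of Theorem~\ref{abex} into an algorithm, the only non-formal ingredient being computations in the quotient $\Gamma/N$. Write $\rho\colon\Gamma\to\Gamma/N$ for the projection and let $X$ be the given finite generating set of $\Gamma$, so that $\rho(X)$ generates $\Gamma/N$. Every test we need about the quotient is carried out through the composite $\mathbb F_X\to\Gamma\xrightarrow{\ \rho\ }\Gamma/N$, and this is effective: a word $w\in\mathbb F_X$ represents $1_{\Gamma/N}$ if and only if the associated word over $\rho(X)$ does, which is decidable because $\Gamma/N$ has solvable Word Problem.

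On input $n$, I would first produce a finite $W\subset\mathbb F_X$ whose image $\rho\pi_{\Gamma,X}(W)$ lies in $\mathfrak F\o l_{\Gamma/N,\rho(X)}(2n)$. This is the standard fact that an amenable group with solvable Word Problem has computable F\o lner sets: enumerate finite subsets of $\mathbb F_X$ and, for each, decide the inequalities \eqref{folner} in $\Gamma/N$ via the Word Problem; amenability of $\Gamma/N$ guarantees the search halts. Using the Word Problem once more, I would then thin $W$ to a subset $W'$ on which $\rho\pi_{\Gamma,X}$ is injective, by keeping a single preimage of each quotient element. This leaves the image unchanged, so $A:=\pi_{\Gamma,X}(W')$ still satisfies $\rho(A)\in\mathfrak F\o l_{\Gamma/N,\rho(X)}(2n)$, while now $|A|=|\rho(A)|$, which is exactly the hypothesis of Theorem~\ref{abex}; the number $|A|=|W'|$ is known explicitly.

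It remains to exhibit a finite preimage of $A\,C_{2n|A|^2}(A^{-1}XA\cap N)$ in $\mathbb F_X$. For $S=A^{-1}XA\cap N$ I would list the words $(w')^{-1}x\,w'$ with $w'\in W'$ and $x\in X$, and retain those mapping to $1_{\Gamma/N}$; this is decidable and yields a finite preimage $\widetilde S\subset\mathbb F_X$ of $S$. Since $N$ is Abelian the elements of $S$ pairwise commute, so $C_{2n|A|^2}(S)$ is a genuine cube in $\Gamma$; fixing an ordering of $\widetilde S$ and taking the corresponding products of powers gives a finite set of words $C_{2n|A|^2}(\widetilde S)\subset\mathbb F_X$ projecting onto it. Concatenating then produces $\widetilde F:=W'\cdot C_{2n|A|^2}(\widetilde S)\subset\mathbb F_X$ with $\pi_{\Gamma,X}(\widetilde F)=A\,C_{2n|A|^2}(S)$, which by Theorem~\ref{abex} is an $n$-F\o lner set of $\Gamma$. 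This describes the required algorithm.

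The main obstacle is not the shape of the F\o lner set, already supplied by Theorem~\ref{abex}, but making each ingredient effective: extracting $S$ from the conjugates $A^{-1}XA$ requires deciding membership in $N$, and securing $|A|=|\rho(A)|$ requires deciding equalities in $\Gamma$ modulo $N$. Both reduce to the Word Problem of $\Gamma/N$, which is precisely why its solvability is the hypothesis that closes the argument, amenability of the quotient being used only for termination of the initial search. Finite presentation of $\Gamma$ is not actually needed here: finite generation of $\Gamma$ together with the quotient being amenable with solvable Word Problem suffices.
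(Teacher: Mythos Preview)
Your approach is exactly the paper's: use solvability of the Word Problem in $\Gamma/N$ to compute a preimage $W'$ of a $2n$-F\o lner set with $|W'|=|\rho\pi_{\Gamma,X}(W')|$, then compute a preimage of $S=A^{-1}XA\cap N$, and output $W'\cdot C_{2n|A|^2}(\widetilde S)$. Your closing remark that finite presentation is unnecessary is also correct.

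There is, however, one concrete slip that would make the step fail as written. You describe $\widetilde S$ as the set of words $(w')^{-1}xw'$ with $w'\in W'$ and $x\in X$ that die in $\Gamma/N$. But $A^{-1}XA$ consists of elements $a_1^{-1}xa_2$ with \emph{two independent} choices $a_1,a_2\in A$; your list only captures conjugates $a^{-1}xa$, and such a conjugate lies in $N$ iff $x\in N$ already. Thus your $\widetilde S$ projects onto the proper subset $\{a^{-1}xa:a\in A,\ x\in X\cap N\}$ of $S$, and the cube $C_{2n|A|^2}$ built from it need not be $(2n|A|^2)^{-1}$-invariant under the remaining elements of $S_x=A^{-1}xA\cap N$, which is exactly what the proof of Theorem~\ref{abex} uses to bound $|E_2^x|/|F|$. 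The fix is immediate: enumerate $w_1^{-1}xw_2$ over all $w_1,w_2\in W'$ and $x\in X$, retaining those that map to $1_{\Gamma/N}$. With that correction your argument is complete and matches the paper's proof.
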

\begin{proof}
Consider the case of $\Gamma / N$ amenable with solvable Word Problem and with the set $\rho(X)$ as generators. If $\pi_{\Gamma / N}\colon \mathbb F_{X}\rightarrow \Gamma / N $ is the canonical epimorphism, for every $n$ we can compute $\mathcal A \in \mathbb F_X$ such that
 $\pi_{\Gamma / N}(\mathcal A)\in \mathfrak F\o l_{\Gamma/N, \rho(X)}(2n)$, but also with $|\mathcal A|=|\pi_{\Gamma / N}(\mathcal A)|$, by the solvability of the Word Problem.\\
 But then $A:=\pi_{\Gamma,X}(\mathcal A)$ is such that $\rho(A)=\pi_{\Gamma / N}(\mathcal A)\in \mathfrak F\o l_{\Gamma/N, \rho(X)}(2n)$ and $|A|=|\rho(A)|$, because:
$$|\rho(A)|\leq|A|\leq|\mathcal A|=|\pi_{\Gamma / N}(\mathcal A)|.$$ 
Moreover, given an element  $\omega\in \mathcal A^{-1}X\mathcal A$ we can compute if $\pi_{\Gamma / N}(\omega)=1_{\Gamma / N}$ or not, and then we can compute the preimage of $A^{-1}XA\cap N$ in $\mathbb F_X$ and finally we can compute a preimage of the $n$-F\o lner sets for $\Gamma$.
\end{proof}

This implies again that Kharlampovich groups have computable F\o lner sets, because they are Abelian extensions of  finitely presented metabelian, and therefore residually finite with solvable WP, groups.\\Notice that the Abelian group $N$ may be not finitely generated.

\begin{corollary}
If $\Gamma$ is finitely generated by $X$ and $N \triangleleft \Gamma$ is an Abelian normal subgroup, denoting with $\rho: \Gamma \rightarrow \Gamma / N$ the projection:
$$F_{\Gamma}(n)\leq F_{\Gamma/N}(2n) (2 n F_{\Gamma/N}(2n)^2)^{|X|F_{\Gamma/N}(2n)^2}.$$
\end{corollary}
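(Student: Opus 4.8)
The plan is to read off this bound as a direct corollary of Theorem \ref{abex}: I will apply that theorem to a set $A$ whose image $\rho(A)$ is an \emph{optimal} (minimum cardinality) $2n$-F\o lner set of $\Gamma/N$, and then bound the cardinality of the resulting F\o lner set of $\Gamma$ by an elementary count. Here the F\o lner function $F_{\Gamma/N}$ of the quotient is understood with respect to the generating set $\rho(X)$, which indeed generates $\Gamma/N$ since $\rho$ is surjective.

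First I would select $\rho(A)\in\mathfrak F\o l_{\Gamma/N,\rho(X)}(2n)$ of minimal cardinality, so that $|\rho(A)|=F_{\Gamma/N}(2n)$ by definition of the F\o lner function. Choosing, for each element of $\rho(A)$, an arbitrary preimage under the surjection $\rho$ produces a finite set $A\subset\Gamma$ with $|A|=|\rho(A)|=F_{\Gamma/N}(2n)$; this is precisely the hypothesis demanded by Theorem \ref{abex}. That theorem then guarantees that $F:=A\,C_{2n|A|^2}(A^{-1}XA\cap N)$ lies in $\mathfrak F\o l_{\Gamma,X}(n)$. Since $F_\Gamma(n)$ is by definition the minimum cardinality of an $n$-F\o lner set, it suffices to estimate $|F|$ from above.

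Next I would count $|F|$. As already recorded in the proof of Theorem \ref{abex}, the defining injectivity of $\rho_{|_A}$ gives $|F|=|A|\,|B|$, where $B:=C_{2n|A|^2}(S)$ and $S:=A^{-1}XA\cap N$. The estimate $|S|\le|X||A|^2$ also appears in that proof, and from the definition of the cube one has $|C_m(y_1,\dots,y_s)|\le m^s$; applying this with $m=2n|A|^2$ and $s=|S|$ yields $|B|\le(2n|A|^2)^{|X||A|^2}$. Combining these gives $|F|\le|A|\,(2n|A|^2)^{|X||A|^2}$, and substituting $|A|=F_{\Gamma/N}(2n)$ produces exactly the claimed inequality.

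I do not expect a genuine obstacle, as the statement is essentially a bookkeeping consequence of Theorem \ref{abex}. The only points requiring care are the existence of a lift $A$ realizing $|A|=|\rho(A)|$, which is immediate from surjectivity of $\rho$, and the two cardinality estimates $|S|\le|X||A|^2$ and $|B|\le m^{|S|}$, both of which are direct from the definitions. The mild subtlety is merely to track the base $2n|A|^2$ and the exponent $|X||A|^2$ correctly through the substitution $|A|=F_{\Gamma/N}(2n)$ so that the two nested occurrences of $F_{\Gamma/N}(2n)^2$ appear in the right places.
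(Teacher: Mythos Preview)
Your proposal is correct and follows essentially the same approach as the paper's own proof: choose an optimal $2n$-F\o lner set in $\Gamma/N$, lift it to $A\subset\Gamma$ with $|A|=|\rho(A)|=F_{\Gamma/N}(2n)$, apply Theorem~\ref{abex}, and bound $|AB|$ via $|S|\le |X||A|^2$ and $|C_m(S)|\le m^{|S|}$. The paper's version is simply terser, leaving the cube cardinality estimate implicit.
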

\begin{proof}

We consider $\rho(A)\in \mathfrak F\o l_{\Gamma/N}(2n)$ such that $|\rho(A)|=|A|= F_{\Gamma/N}(2n)$, recall that $S=A^{-1}XA\cap N$ and then $|S|\leq |X||A|^2$. 
\end{proof}

\section{Splitting extensions}\label{S}
The situation is clearer if the extension splits. In this case we can also consider extensions by  amenable groups.

\begin{theorem}\label{semid}
Let $N$ and $H$ be groups respectively generated by the finite sets $Z$ and $Y$, let $\phi\colon H \rightarrow Aut(N)$ be a homomorphism.
Let $c:=\max\{|\phi_y(z)|_Z:\; z\in Z,\; y\in Y\}$.
\\Then if $A\in \mathfrak F\o l_{H,Y}(n)$ and $B\in \mathfrak F\o l_{N,Z}(nc^{|A|_Y})$  we have
$$AB\in \mathfrak F\o l_{N\rtimes_\phi H,Z\cup Y}(n),$$
(recall that $|A|_Y=\max\{|a|_Y:\;a\in A\}$).
\end{theorem}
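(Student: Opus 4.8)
The plan is to show that for each generator $s \in Z \cup Y$ the Følner inequality $|F \setminus sF|/|F| \leq n^{-1}$ holds for $F := AB$, where $A \subset H$, $B \subset N$, and we view $N \rtimes_\phi H$ as having underlying set $N \times H$ with elements written as products $ba$ (or $ab$, depending on convention). First I would record that $|AB| = |A||B|$: since the projection to $H$ separates the $A$-coordinate and $B \subset N$ lies in the kernel, each element of $AB$ has a unique expression $ab$ with $a \in A$, $b \in B$. The proof then splits into two cases according to whether the multiplying generator comes from $Y$ (the quotient/acting side $H$) or from $Z$ (the normal side $N$), exactly paralleling the two-case structure already used in Theorem~\ref{abex} and Theorem~\ref{semiabe}.

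For a generator $y \in Y$, the key is that left multiplication by $y$ essentially acts on the $A$-coordinate after a twist by $\phi$. I would write $y \cdot ab = (y a)(a^{-1} y^{-1} \cdot \ldots)$ more carefully by using the semidirect product multiplication $\,b a \mapsto$ and tracking the action; the upshot is that failing to lie in $yAB$ forces the $H$-projection $\rho(a)$ to lie in $A \setminus yA$ (using $A \in \mathfrak{F}\o l_{H,Y}(n)$), giving a contribution bounded by $|A \setminus yA||B| / (|A||B|) \leq n^{-1}$. This is the ``easy'' direction and mirrors the bound \eqref{2stelle} in the Abelian case.

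The substantive case is $z \in Z$. Here multiplication by $z$ keeps the $H$-coordinate fixed but, to re-express $z \cdot ab$ in the normal form $a' b'$, one must conjugate $z$ across $a$: schematically $z a = a (\phi_{a^{-1}}(z))$, so the relevant obstruction lives in $B \setminus \phi_{a^{-1}}(z) B$ inside $N$. The crucial estimate is that the conjugated generator $\phi_{a^{-1}}(z)$ has $Z$-length at most $c^{|a|_Y} \leq c^{|A|_Y}$, because each of the at most $|a|_Y$ letters of $a$ multiplies the length by a factor of at most $c = \max\{|\phi_y(z)|_Z\}$ under the action. Since $B \in \mathfrak{F}\o l_{N,Z}(nc^{|A|_Y})$, Lemma~\ref{catena} applied to the element $\phi_{a^{-1}}(z) \in N$ of length $\leq c^{|A|_Y}$ gives
$$\frac{|B \setminus \phi_{a^{-1}}(z)B|}{|B|} \leq c^{|A|_Y}\,(n c^{|A|_Y})^{-1} = n^{-1},$$
uniformly over $a \in A$. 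Summing over the (disjoint, by uniqueness of normal form) $a$-slices yields $|F \setminus zF|/|F| \leq n^{-1}$.

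I expect the main obstacle to be bookkeeping the semidirect-product multiplication so that the conjugation identity $z a = a\, \phi_{a^{-1}}(z)$ and the resulting length bound $|\phi_{a^{-1}}(z)|_Z \leq c^{|a|_Y}$ are stated with the correct variance of $\phi$; this is where the exponent $c^{|A|_Y}$ in the hypothesis on $B$ is calibrated, and getting the direction of the action and the iteration count right (length of $a$ in $Y$, not in $Z$) is the delicate point. Once the length bound is in hand, invoking Lemma~\ref{catena} makes the two estimates combine cleanly, just as \eqref{2stelle} and \eqref{3stelle} combined in Theorem~\ref{abex}.
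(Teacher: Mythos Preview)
Your proposal is essentially the same as the paper's proof: split into the two generator cases, use the F\o lner condition on $A$ directly for $y\in Y$, and for $z\in Z$ pass $z$ across $a$ by conjugation, bound $|\phi_{a^{\pm 1}}(z)|_Z\le c^{|a|_Y}$ by iterating the definition of $c$, and invoke Lemma~\ref{catena}. The only cosmetic difference is that the paper adopts the convention $a^{-1}za=\phi_a(z)$ (so it writes $\phi_a$ where you write $\phi_{a^{-1}}$), which is precisely the variance bookkeeping you already flagged and does not affect the estimate since $|a|_Y=|a^{-1}|_Y$.
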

\begin{proof}
We first observe that $|AB|=|A||B|$ because $A\subset H$ and $B\subset N$.
\\For $y\in Y$($\subset H$) we have:
$$\frac{|AB\setminus yAB|}{|AB|}\leq\frac{|A\setminus yA||B|}{|A||B|}\leq n^{-1}.$$
For $z\in Z$ ($\subset N$) we have:\\
$zab= aa^{-1}zab= a\phi_a(z)b$, so that $\{ab\in AB: zab\notin AB\} \subset \{ab\in AB: \phi_a(z)b\notin B\}$.
We deduce
$$\frac{|AB\setminus z AB|}{|AB|}\leq\frac{|\bigcup_{a\in A} a[B\setminus \phi_a(z) B]|}{|A||B|}\leq\frac{\sum_{a\in A}|B\setminus \phi_a(z) B|}{|A||B|}.$$
Since $|\phi_a(z)|_Z\leq c^{|a|_Y}\leq c^{|A|_Y}$ then, using Lemma \ref{catena}:
$$\frac{\sum_{a\in A}|B\setminus \phi_a(z)B|}{|A||B|}\leq \frac{|\phi_a(z)|_Z }{c^{|A|_Y} n}\leq n^{-1},$$
because $B\in \mathfrak F\o l_{N}(nc^{|A|_Y}).$

\end{proof}


\begin{corollary}
The semidirect product of two finitely generated groups with computable F\o lner sets has computable F\o lner sets.
\end{corollary}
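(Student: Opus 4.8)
The plan is to run the two given algorithms in series, feeding their outputs into the construction of Theorem \ref{semid}. Write $\Gamma = N\rtimes_\phi H$ with $N=\langle Z\rangle$ and $H=\langle Y\rangle$ both having computable F\o lner sets, and equip $\Gamma$ with the generating set $Z\cup Y$; by Proposition \ref{inva} the choice of generating set is immaterial. We assume, as is implicit in presenting $\Gamma$ as an explicit semidirect product, that the action is given effectively, i.e.\ for each $y\in Y$ and $z\in Z$ we are handed a word in $\mathbb F_Z$ representing $\phi_y(z)$. The algorithm we build takes $n\in\mathbb N$ as input and proceeds as follows.

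First I would invoke the algorithm for $H$ on input $n$ to obtain a finite set $\mathcal A\subset\mathbb F_Y$ with $\pi_{H,Y}(\mathcal A)=A\in\mathfrak F\o l_{H,Y}(n)$. The quantities $|A|_Y$ and $c=\max\{|\phi_y(z)|_Z : z\in Z, y\in Y\}$ that appear in Theorem \ref{semid} cannot be computed exactly, since that would require solving the word problems in $H$ and $N$, which we do not have at our disposal. The key observation is that only upper bounds are needed: from $\mathcal A$ we read off $\bar m:=\max\{|w| : w\in\mathcal A\}\ge |A|_Y$, and from the given words for the $\phi_y(z)$ we read off some $\bar c\ge\max\{1,c\}$, both effectively. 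Since then $n\bar c^{\,\bar m}\ge n c^{|A|_Y}$ and F\o lner sets are nested, namely $\mathfrak F\o l_{N,Z}(m)\subset\mathfrak F\o l_{N,Z}(m')$ whenever $m\ge m'$ (directly from the definition \eqref{folner}), any $(n\bar c^{\,\bar m})$-F\o lner set of $N$ is automatically $(nc^{|A|_Y})$-F\o lner.

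Next I would invoke the algorithm for $N$ on input $n\bar c^{\,\bar m}$ to obtain $\mathcal B\subset\mathbb F_Z$ with $\pi_{N,Z}(\mathcal B)=B\in\mathfrak F\o l_{N,Z}(n\bar c^{\,\bar m})\subset\mathfrak F\o l_{N,Z}(nc^{|A|_Y})$. Now $A$ and $B$ satisfy the hypotheses of Theorem \ref{semid}, so $AB\in\mathfrak F\o l_{\Gamma,Z\cup Y}(n)$. Viewing $\mathbb F_Y$ and $\mathbb F_Z$ inside $\mathbb F_{Z\cup Y}$, the algorithm outputs $\mathcal A\mathcal B:=\{wv : w\in\mathcal A,\, v\in\mathcal B\}$; since $\pi_{\Gamma,Z\cup Y}(\mathcal A\mathcal B)=AB$, this is the required finite preimage of an $n$-F\o lner set, and the algorithm terminates. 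The only genuine obstacle is the one resolved above: the passage from the exact but uncomputable constants $|A|_Y$ and $c$ to computable upper bounds, rendered harmless by the monotonicity of the F\o lner condition in the invariance parameter.
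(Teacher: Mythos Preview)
Your proof is correct and follows essentially the same approach as the paper: compute a preimage $\mathcal A$ of an $n$-F\o lner set for $H$, read off the maximal word length $m$ in $\mathcal A$ as an upper bound for $|A|_Y$, then compute a preimage $\mathcal B$ of an $(nc^m)$-F\o lner set for $N$ and output $\mathcal A\mathcal B$. You are slightly more explicit than the paper in noting that $c$ itself need only be bounded above by the free-group lengths of the given words for $\phi_y(z)$, and in spelling out the monotonicity $\mathfrak F\o l(m)\subset\mathfrak F\o l(m')$ for $m\ge m'$, but the argument is the same.
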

\begin{proof}
We can compute $\mathcal A$, the preimage of a $n$-F\o lner set $A$ for $H$, we compute $m$, the maximal length of words in $\mathcal A$ in the free group. We compute $\mathcal B$, the preimage of $B\in \mathfrak F\o l_{N}(nc^{m})$. Since $|A|_Y\leq m$ we have $B\in \mathfrak F\o l_{N}(nc^{|A|_Y})$ and then by Theorem \ref{semid} we have that $\mathcal A\mathcal B$ is a preimages of an $n$-F\o lner set for the semidirect product.
\end{proof}

\begin{corollary}
In the same hypotheses of the above theorem:
$$F_{N\rtimes_\phi H}(n)\leq F_{H}(n|Y|) F_{N}(nc^{F_{H}(n|Y|)}).$$

\end{corollary}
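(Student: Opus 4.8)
The plan is to derive this cardinality bound directly from Theorem~\ref{semid} by choosing the F\o lner sets $A$ and $B$ to be as small as possible. The inequality in Theorem~\ref{semid} tells us that whenever $A\in \mathfrak F\o l_{H,Y}(n)$ and $B\in \mathfrak F\o l_{N,Z}(nc^{|A|_Y})$, the product $AB$ is an $n$-F\o lner set for $N\rtimes_\phi H$ of cardinality $|A||B|$. Hence $F_{N\rtimes_\phi H}(n)\leq |A||B|$ for any such pair, and the task reduces to bounding $|A|$, $|B|$, and in particular the exponent $|A|_Y$ in terms of the F\o lner functions of the factors.

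First I would invoke Lemma~\ref{conn} applied to $H$: it guarantees a set $A\in \mathfrak F\o l_{H,Y}(n)$ with $|A|\leq F_{H,Y}(|Y|n)$ and, crucially, with $A\subset B_{|A|}(H)$, i.e.\ every element of $A$ has word length at most $|A|$ in the generators $Y$. This is exactly what lets me control the exponent: it gives $|A|_Y\leq |A|\leq F_{H}(n|Y|)$. The connectedness conclusion of Lemma~\ref{conn} is the key ingredient here, since without a bound on the word lengths of elements of $A$ the quantity $c^{|A|_Y}$ could not be estimated by the F\o lner function alone.

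Next I would choose $B$ of minimal cardinality among $\mathfrak F\o l_{N,Z}(nc^{|A|_Y})$, so that $|B|=F_{N,Z}(nc^{|A|_Y})$. Since $c\geq 1$ and the F\o lner function is non-decreasing, the bound $|A|_Y\leq F_{H}(n|Y|)$ yields
$$|B|=F_{N}(nc^{|A|_Y})\leq F_{N}\bigl(nc^{F_{H}(n|Y|)}\bigr).$$
Multiplying the two estimates gives $|A||B|\leq F_{H}(n|Y|)\,F_{N}(nc^{F_{H}(n|Y|)})$, which is precisely the claimed bound.

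The only subtle point—what I would flag as the main thing to get right rather than a genuine obstacle—is the monotonicity argument in the exponent: one must check that replacing $|A|_Y$ by its upper bound $F_{H}(n|Y|)$ inside $c^{(\cdot)}$ only increases the argument of $F_N$, which requires $c\geq 1$ (true, as $c$ is a maximum of word lengths, and we may assume $c\geq 1$ without loss) together with the non-decreasing nature of $F_N$. Everything else is a direct substitution of the cardinality bounds from Lemma~\ref{conn} and the definition of the F\o lner function into the conclusion of Theorem~\ref{semid}.
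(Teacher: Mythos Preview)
Your proof is correct and follows essentially the same approach as the paper: invoke Lemma~\ref{conn} to get $A\in\mathfrak F\o l_{H}(n)$ with $|A|_Y\leq |A|\leq F_H(|Y|n)$, then choose $B$ optimally. The only cosmetic difference is that the paper picks $B$ optimal in $\mathfrak F\o l_{N}(nc^{F_{H}(n|Y|)})$ and then observes $B\in\mathfrak F\o l_{N}(nc^{|A|_Y})$ by inclusion of F\o lner families, whereas you pick $B$ optimal in $\mathfrak F\o l_{N}(nc^{|A|_Y})$ and then bound $|B|$ by monotonicity of $F_N$; these are two sides of the same coin.
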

\begin{proof}
By Lemma \ref{conn} we have $A\in \mathfrak F\o l_{H}(n)$ with $|A|_Y\leq |A|\leq F_H(|Y|n)$ then we choose the optimal $B\in \mathfrak F\o l_{N}(nc^{F_{H}(n|Y|)})$. Clearly $B\in \mathfrak F\o l_{N}(nc^{|A|_Y})$.
\end{proof}

\section{General extensions }\label{G}

\begin{theorem}\label{genex}${}$
Let $\Gamma$ be generated by the finite set $X$ and $N$ be a normal subgroup of $\Gamma$ generated by the finite set $Y$. Let $\rho: \Gamma\rightarrow K:=\Gamma /N$ be the projection to the quotient. For any finite subset $A\subset \Gamma$ such that 
$A':=\rho(A)\in \mathfrak F\o l_{K, \rho(X)}(2n)$, with $ |A|=|A'|$ and $|A|_X\leq|A'|_{\rho (X)}$,\\
and any $B\in \mathfrak F\o l_{N, Y}(2n|A'|^2 \Delta_N^{\Gamma}(2|A'|_{\rho(X)}+1))$ we have
$$AB\in \mathfrak F\o l_{\Gamma, X}(n).$$
\end{theorem}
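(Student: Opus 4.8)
The plan is to follow the proof of the Abelian case, Theorem \ref{abex}, almost verbatim; the single new ingredient is that, $N$ no longer being Abelian, we cannot take $B$ to be a cube and invoke Lemma \ref{cubo}, so $B$ is an arbitrary F\o lner set of $N$ and the defects $|B\setminus sB|$ must instead be estimated through Lemma \ref{catena} --- which is exactly what forces the distortion function to enter. I would first put $F:=AB$ and check $|F|=|A||B|$: since $|A|=|A'|$ the map $\rho$ is injective on $A$, and because $B\subset N=\ker\rho$ each $g\in F$ admits a unique factorization $g=ab$ with $a\in A$, $b\in B$ (applying $\rho$ to $ab=a_1b_1$ gives $\rho(a)=\rho(a_1)$, hence $a=a_1$ and $b=b_1$). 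Fixing $x\in X$, I split $F\setminus xF$ into $E_1^x=\{ab\in F\setminus xF:\rho(a)\notin\rho(x)A'\}$ and $E_2^x=\{ab\in F\setminus xF:\rho(a)\in\rho(x)A'\}$.

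The set $E_1^x$ is controlled exactly as in Theorem \ref{abex}: for $g=ab\in E_1^x$ we have $\rho(g)=\rho(a)\in A'\setminus\rho(x)A'$, so by injectivity of $\rho$ on $A$ and the hypothesis $A'\in\mathfrak F\o l_{K,\rho(X)}(2n)$ (note $\rho(x)\in\rho(X)$),
$$\frac{|E_1^x|}{|F|}=\frac{|A'\setminus\rho(x)A'|\,|B|}{|A||B|}\le\frac{1}{2n}.$$

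The work is in $E_2^x$, and here the distortion enters. As in the Abelian proof, if $g=ab\in E_2^x$ then $\rho(a)\in\rho(x)A'=\rho(xA)$, so there are $a'\in A$ and $s:=a^{-1}xa'\in A^{-1}xA\cap N=:S_x$ with $a=xa's^{-1}$; rewriting $g=xa'(s^{-1}b)$ and using $g\notin xAB$ forces $b\notin sB$. Hence $|E_2^x|\le|A|\sum_{s\in S_x}|B\setminus sB|$ and $\frac{|E_2^x|}{|F|}\le\sum_{s\in S_x}\frac{|B\setminus sB|}{|B|}$. To bound each term I cannot use Lemma \ref{cubo}; instead, every $s=a^{-1}xa'\in S_x$ satisfies $|s|_X\le2|A|_X+1\le2|A'|_{\rho(X)}+1$ by the hypothesis $|A|_X\le|A'|_{\rho(X)}$, and since $s\in N$ the definition of the distortion gives $|s|_Y\le\Delta_N^{\Gamma}(2|A'|_{\rho(X)}+1)=:D$. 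With $B\in\mathfrak F\o l_{N,Y}(2n|A'|^2D)$, Lemma \ref{catena} yields $\frac{|B\setminus sB|}{|B|}\le\frac{|s|_Y}{2n|A'|^2D}\le\frac{1}{2n|A'|^2}$; since $|S_x|\le|A|^2=|A'|^2$, summation gives $\frac{|E_2^x|}{|F|}\le\frac{1}{2n}$.

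Adding the two estimates gives $\frac{|F\setminus xF|}{|F|}\le\frac{1}{n}$ for every $x\in X$, proving $AB\in\mathfrak F\o l_{\Gamma,X}(n)$. I expect the only delicate point to be the length bookkeeping that identifies $2|A'|_{\rho(X)}+1$ as the correct argument of $\Delta_N^{\Gamma}$: one must pass from the chosen preimages $a,a'\in A$ to a length bound in $X$ through $|A|_X\le|A'|_{\rho(X)}$, and it is essential that $\Delta_N^{\Gamma}$ be evaluated only on elements genuinely lying in $N$, which is guaranteed precisely because $s\in S_x\subset N$.
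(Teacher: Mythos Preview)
Your proof is correct and follows essentially the same route as the paper's own argument: the same $E_1^x/E_2^x$ decomposition, the same rewriting $g=xa's^{-1}b$ with $s\in S_x=A^{-1}xA\cap N$, and the same use of Lemma \ref{catena} together with the bound $|s|_X\le 2|A'|_{\rho(X)}+1$ to control $|B\setminus sB|$ via the distortion function. Your presentation is in fact slightly more explicit about the unique factorization $g=ab$ and about why the inequality $\frac{|E_2^x|}{|F|}\le\sum_{s\in S_x}\frac{|B\setminus sB|}{|B|}$ holds, but there is no substantive difference.
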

\begin{proof}

Setting $F:=AB$ it is easy to see that $|F|=|A'||B|$ because $\rho$ is injective on $A$. 

For each $x\in X$, the set $F\setminus xF$ is the disjoint union of the sets $E^x_1$ and $E^x_2$, defined by:
$$E^x_1=\{g\in F\setminus xF: \rho(g)\notin \rho(x) A'\}$$
$$E^x_2=\{g\in F\setminus xF: \rho(g)\in \rho(x) A'\}.$$
We can write $g=ab$, with $a\in A$ and $b\in B$, in a unique way.

If $g\in E^x_1$, since $\rho(g)=\rho(a)\notin \rho(x) A'$ we have $\rho(a)\in A'\setminus \rho(x) A'$. Moreover, since $\rho$ is injective on $A$:
$$\frac{|E^x_1|}{|F|}=\frac{ |A'\setminus \rho(x) A'||B|}{|A'||B|}\leq (2n)^{-1}.$$

If $g\in E^x_2$ then $\rho(g)=\rho(a)\in\rho(x)A'$ so that there exists $a'\in A$ satisfying $\rho(a)=\rho(x) \rho(a')$. The images by $\rho$ of $a$ and $xa'$ are the same so we can find $s\in N$ such that $as=xa'$.

Setting $S_x:=A^{-1}xA\cap N$ we see that $s\in S_x$ and $|S_x|\leq |A|^2$. 
Then $g=xa's^{-1}b$, and since $g\notin xAB$ we deduce that $b\notin sB$. It follows that:

$$\frac{|E^x_2|}{|F|}\leq\frac{|\{xa's^{-1}b,\,\, a'\in A,\,\, s\in S_x,\,\,b\in B\setminus sB\}|}{|A'||B|}\leq \sum_{s\in S_x}\frac{|B\setminus sB|}{|B|}.$$
We have a bound for $|S_x|$; we need a bound for the length of the elements in $S_x$. For every $s\in S_x$ we have:\\
$|s|_Y\leq \Delta_N^{\Gamma}(|s|_X)$. On the other hand,  $|s|_X= |a^{-1}xa'|_X\leq 2|A|_X+1\leq 2|A'|_{\rho(X)}+1$.
\\From Lemma \ref{catena} we then deduce:
$$\frac{|B\setminus sB|}{|B|}\leq (2n|A'|^2)^{-1}\leq \frac{1}{2n|S_x|}.$$
Finally $\dfrac{|F\setminus x F|}{|F|}=\dfrac{|E^x_1|}{|F|}+\dfrac{|E^x_2|}{|F|}\leq n^{-1},$ showing that $F$ is the an $n$-F\o lner set.
\end{proof}

\begin{corollary}
Let $N,\Gamma,K$ finitely generated groups such that:
$$1\rightarrow N\rightarrow \Gamma \rightarrow K\rightarrow 1.$$
If $N$ has computable F\o lner sets, $\Delta_N^{\Gamma}$ is subrecursive, $K$ is amenable with solvable Word Problem, then $\Gamma$ has computable F\o lner sets.
\end{corollary}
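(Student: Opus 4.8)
The statement to prove is the final Corollary, which asserts computability of F\o lner sets for $\Gamma$ from three hypotheses: $N$ has computable F\o lner sets, $\Delta_N^{\Gamma}$ is subrecursive, and $K$ is amenable with solvable Word Problem. The natural strategy is to realize the set $AB$ of Theorem~\ref{genex} algorithmically, producing for each input $n$ a finite subset of $\mathbb F_X$ projecting onto an $n$-F\o lner set of $\Gamma$. The first step is to build the $K$-part $A$. Since $K=\Gamma/N$ is amenable with solvable Word Problem with respect to the generating set $\rho(X)$, the argument already used in the Corollary of Section~\ref{A} applies: by enumerating finite subsets of $\mathbb F_X$ and testing the F\o lner condition~\eqref{folner} in $K$ (using solvability of the Word Problem of $K$), we can compute $\mathcal A\subset \mathbb F_X$ such that $A':=\pi_{\Gamma/N}(\mathcal A)\in \mathfrak F\o l_{K,\rho(X)}(2n)$ and moreover $|\mathcal A|=|A'|$ and $|\mathcal A|_X=|A'|_{\rho(X)}$; setting $A:=\pi_{\Gamma,X}(\mathcal A)$ then forces $|A|=|A'|$ and $|A|_X\le |A'|_{\rho(X)}$, exactly the hypotheses required by Theorem~\ref{genex} on the $K$-part.

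Second, I would compute the invariance parameter needed for the $N$-part. From the explicit set $\mathcal A$ already in hand I read off the natural number $|A'|_{\rho(X)}=|\mathcal A|_X$, and since $\Delta_N^{\Gamma}$ is subrecursive I can compute (or compute an upper bound for) the integer $m:=2n|A'|^2\,\Delta_N^{\Gamma}(2|A'|_{\rho(X)}+1)$. Because $N$ has computable F\o lner sets with respect to $Y$, I then invoke the algorithm of Definition~\ref{prima} for $N$ on input $m$ to obtain a finite $\mathcal B\subset \mathbb F_Y$ with $B:=\pi_{N,Y}(\mathcal B)\in \mathfrak F\o l_{N,Y}(m)$. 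Viewing $\mathbb F_Y\subset \mathbb F_{X}$ (after expressing the generators $Y$ of $N$ as words in $X$, which is possible since $N\le\Gamma$), the concatenation $\mathcal A\mathcal B\subset\mathbb F_X$ is a finite set whose projection is precisely the product $AB$. By Theorem~\ref{genex} this $AB$ is $n$-F\o lner for $\Gamma$, so $\mathcal A\mathcal B$ is the desired output.

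The only subtle point, and the step I expect to require the most care, is confirming that $m$ is genuinely \emph{computable} rather than merely finite: this is exactly what subrecursivity of $\Delta_N^{\Gamma}$ buys us, since a subrecursive function admits a recursive upper bound, and it suffices to feed such an upper bound (in place of the exact value) into the F\o lner-set algorithm for $N$ — a larger invariance parameter still yields a valid $n$-F\o lner set for $\Gamma$ by Theorem~\ref{genex}, because enlarging the lower bound on the invariance of $B$ only strengthens the estimate on $|E^x_2|$. Thus no exact evaluation of the distortion is needed. The remaining verifications — that $\mathcal A$ can be produced with the length normalization $|\mathcal A|_X=|A'|_{\rho(X)}$ via the Word Problem of $K$, and that the three algorithms (for $A$, for $m$, and for $B$) compose into a single halting procedure — are routine, paralleling the proofs of the analogous Corollaries in Sections~\ref{A} and~\ref{S}.
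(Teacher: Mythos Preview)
Your proposal is correct and follows essentially the same route as the paper's own proof: use the solvable Word Problem in $K$ to compute $\mathcal A$ with $\pi_K$ injective and length-preserving on it, then use subrecursivity of $\Delta_N^{\Gamma}$ to bound the invariance parameter $m$ and invoke the F\o lner-set algorithm for $N$. Your write-up is in fact more careful than the paper's on two points it leaves implicit --- that a recursive \emph{upper bound} for $m$ suffices, and that one must rewrite $\mathcal B\subset\mathbb F_Y$ as words in $\mathbb F_X$ before forming $\mathcal A\mathcal B$.
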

\begin{proof}
$N$ and $K$ have computable F\o lner sets. For each $k$ we can construct $\mathcal A\subset \mathbb F_X$ such that $\pi_K(\mathcal A)\in  \mathfrak F\o l_{K}(k)$. We denote $A':=\pi_K(\mathcal A)$. If we consider $A:=\pi_{\Gamma,X}(\mathcal A)$, it is clear that $\rho(A)=A'\in \mathfrak F\o l_{K}(k)$. If $K$ has solvable Word Problem we can detect $\mathcal A$ such that $\pi_K$ is injective on $\mathcal A$ and $|\omega|=|\pi_K(\omega)|_{\rho(X)}$ for every $\omega \in \mathcal A$. So we can compute a preimage for a set $A$ respecting the hypotheses of the Theorem \ref{genex}. For the set $B$ we just need the computability (of a bound) of the number $2n|A'|^2\Delta_N^G(2|A'|_{\rho(X)}+1)$, so if $\Delta_N^{\Gamma}$ is subrecursive we have the thesis.
\end{proof}

Finally, from Theorem \ref{genex} and again using Lemma \ref{conn}:
\begin{corollary}
Let $N,\Gamma,K$ finitely generated groups such that:
$$1\rightarrow N\rightarrow \Gamma \rightarrow K\rightarrow 1.$$
Then
$$ F_{\Gamma}(n)\leq F_{K}(|X|n) F_N(2n F_{K}(|X|n)^2 \Delta_N^{\Gamma}(2F_{K}(|X|n)+1)).$$
Thus if $N$ and $K$ have subrecursive F\o lner function  and if $\Delta_N^\Gamma$ is subrecursive then $\Gamma$ has subrecursive F\o lner function as well.

\end{corollary}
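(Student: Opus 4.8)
The plan is to read off the desired F\o lner set of $\Gamma$ from Theorem \ref{genex}, feeding it a pair $(A,B)$ whose sizes are controlled by $F_K$, $F_N$ and $\Delta_N^{\Gamma}$; the only extra ingredient needed is Lemma \ref{conn}, used on the quotient $K$ to obtain a F\o lner set that is not merely small but \emph{connected}. Before starting I note that $\Gamma$ is amenable, being an extension of the amenable group $K$ by the amenable group $N$, so $F_{\Gamma}(n)$ is finite and it suffices to produce a single $n$-F\o lner set of the claimed cardinality. First I would apply Lemma \ref{conn} to $K$ with respect to $\rho(X)$, at the parameter $2n$ demanded by Theorem \ref{genex}. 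This yields a set $A'\in\mathfrak F\o l_{K,\rho(X)}(2n)$ containing $1_K$, connected, with cardinality bounded by $F_K$ evaluated at a quantity linear in $n$ and, crucially, with $A'\subset B_{|A'|}(K)$, hence $|A'|_{\rho(X)}\le |A'|$. I would then lift $A'$ to $\Gamma$ by choosing for each $a'\in A'$ a $\rho(X)$-geodesic word representing it and reading the same word over $X$; this gives $A\subset\Gamma$ with $\rho(A)=A'$, with $\rho$ injective on $A$ (so $|A|=|A'|$), and with $|A|_X\le |A'|_{\rho(X)}$. These are exactly the three hypotheses on $A$ in Theorem \ref{genex}.

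For the kernel factor I would set $m:=2n|A'|^2\,\Delta_N^{\Gamma}(2|A'|_{\rho(X)}+1)$ and take $B$ of minimal cardinality in $\mathfrak F\o l_{N,Y}(m)$, which exists because $N$ is amenable; thus $|B|=F_N(m)$ and no second appeal to Lemma \ref{conn} is needed for the kernel. Using $|A'|_{\rho(X)}\le |A'|\le F_K(|X|n)$ together with the fact that both $\Delta_N^{\Gamma}$ and $F_N$ are non-decreasing, I would bound $m$, and hence $|B|$, by $F_N\!\left(2n\,F_K(|X|n)^2\,\Delta_N^{\Gamma}(2F_K(|X|n)+1)\right)$.

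The assembly is then immediate: by Theorem \ref{genex} the set $AB$ is $n$-F\o lner for $\Gamma$, and since $\rho$ is injective on $A$ we have $|AB|=|A'|\,|B|$, whence $F_{\Gamma}(n)\le |A'|\,|B|$, which is the stated product inequality. For the subrecursivity conclusion, suppose $f_N\ge F_N$, $f_K\ge F_K$ and $\delta\ge\Delta_N^{\Gamma}$ are recursive. Monotonicity of $F_N$ lets me replace the inner occurrences of $F_K$ and $\Delta_N^{\Gamma}$ by $f_K$ and $\delta$ without decreasing the right-hand side, and then bound $F_N$ by $f_N$ on that argument; since the recursive functions are closed under composition, product and sum, the resulting expression is a recursive upper bound for $F_{\Gamma}$.

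The step I expect to be the genuine obstacle, and the reason Lemma \ref{conn} is indispensable, is controlling the \emph{word length} $|A'|_{\rho(X)}$ rather than merely the cardinality $|A'|$: this length is the input to the distortion function $\Delta_N^{\Gamma}$, which may grow arbitrarily fast, so a crude lift of an arbitrary quotient F\o lner set would give no usable bound. The connectivity assertion of Lemma \ref{conn} — that an optimal F\o lner set lies in a ball whose radius equals its cardinality — is precisely what converts this length into the cardinality bound $F_K$, and it is exactly why the distortion term enters with argument linear in $F_K(\cdots)$. Everything else is elementary bookkeeping of the constants produced by the $2n$-versus-$n$ passage and the monotonicity of the functions involved.
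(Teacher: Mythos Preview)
Your proposal is correct and follows exactly the approach the paper indicates: invoke Lemma \ref{conn} on the quotient $K$ to obtain a small F\o lner set $A'$ contained in a ball of radius $|A'|$, lift it to $A\subset\Gamma$ via $\rho(X)$-geodesic words so that the three hypotheses of Theorem \ref{genex} hold, take $B$ optimal in $\mathfrak F\o l_{N,Y}(m)$, and read off $|AB|=|A'||B|$. Your identification of the key point---that Lemma \ref{conn} is needed precisely to control $|A'|_{\rho(X)}$ so that the distortion argument stays bounded in terms of $F_K$---is exactly right, and your derivation of the subrecursivity clause by monotonicity and closure of recursive functions under composition is fine.
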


\bibliographystyle{amsxport}
\begin{bibdiv}
\begin{biblist}

\bib{AO}{article}{
   author={Arzhantseva, G. N.},
   author={Osin, D. V.},
   title={Solvable groups with polynomial Dehn functions},
   journal={Trans. Amer. Math. Soc.},
   volume={354},
   date={2002},
   number={8},
   pages={3329--3348},
   issn={0002-9947},
   review={\MR{1897402 (2003b:20059)}},
   doi={10.1090/S0002-9947-02-02985-9},
}

	\bib{CAV}{thesis}{
   author={Cavaleri, Matteo},
   title={Algorithms and quantifications in amenable and sofic groups},

   note={Thesis (Ph.D.)--Sapienza Universit\`a di Roma},

   date={2016},

}

\bib{pre}{article}{
   author={Cavaleri, Matteo},
   title={F\o lner functions and the generic Word Problem for finitely generated amenable groups},
  eprint={	arXiv:1703.04133 [math.GR]}
}

\bib{tullio}{book}{
   author={Ceccherini-Silberstein, Tullio},
   author={Coornaert, Michel},
   title={Cellular automata and groups},
   series={Springer Monographs in Mathematics},
   publisher={Springer-Verlag, Berlin},
   date={2010},
   pages={xx+439},
   isbn={978-3-642-14033-4},
   review={\MR{2683112 (2011j:37002)}},
   doi={10.1007/978-3-642-14034-1},
}

\bib{Coo}{book}{
   author={Coornaert, Michel},
   title={Topological dimension and dynamical systems},
   language={English},
   series={Universitext},
      publisher={Springer-Verlag, Berlin},
   date={2015},
  
}

\bib{OD}{article}{
   author={Davis, Tara C.},
   author={Olshanskii, Alexander Yu.},
   title={Relative subgroup growth and subgroup distortion},
   journal={Groups Geom. Dyn.},
   volume={9},
   date={2015},
   number={1},
   pages={237--273},
   issn={1661-7207},
   review={\MR{3343353}},
   doi={10.4171/GGD/312},
}

\bib{A2}{article}{
   author={Erschler, Anna},
   title={Piecewise automatic groups},
   journal={Duke Math. J.},
   volume={134},
   date={2006},
   number={3},
   pages={591--613},
   issn={0012-7094},
   review={\MR{2254627 (2007k:20086)}},
   doi={10.1215/S0012-7094-06-13435-X},
}

\bib{A1}{article}{
   author={Erschler, Anna},
   title={On isoperimetric profiles of finitely generated groups},
   journal={Geom. Dedicata},
   volume={100},
   date={2003},
   pages={157--171},
   issn={0046-5755},
   review={\MR{2011120 (2004j:20087)}},
   doi={10.1023/A:1025849602376},
}

\bib{gri}{article}{ author={Grigorchuk, Rostislav}, title={Solved and unsolved problems around one group}, conference={ title={Infinite groups: geometric, combinatorial and dynamical aspects}, }, book={ series={Progr. Math.}, volume={248}, publisher={Birkh\"auser, Basel}, }, date={2005}, pages={117--218}, review={\MR{2195454 (2007d:20001)}}, doi={10.1007/3-7643-7447-0-5}, }

\bib{G}{article}{
   author={Gromov, Misha},
   title={Entropy and isoperimetry for linear and non-linear group actions},
   journal={Groups Geom. Dyn.},
   volume={2},
   date={2008},
   number={4},
   pages={499--593},
   issn={1661-7207},
   review={\MR{2442946 (2010h:37011)}},
   doi={10.4171/GGD/48},
}

\bib{H}{article}{
   author={Harlampovi{\v{c}}, O. G.},
   title={A finitely presented solvable group with unsolvable Word Problem},
   language={Russian},
   journal={Izv. Akad. Nauk SSSR Ser. Mat.},
   volume={45},
   date={1981},
   number={4},
   pages={852--873, 928}}

\bib{J}{article}{
   author={Janzen, David},
   title={F\o lner nets for semidirect products of amenable groups},
   journal={Canad. Math. Bull.},
   volume={51},
   date={2008},
   number={1},
   pages={60--66},
   issn={0008-4395},
   review={\MR{2384739 (2009e:43001)}},
   doi={10.4153/CMB-2008-008-7},
}
\bib{KMS}{article}{
   author={O. Kharlampovich},
   author={A. Myasnikov},
	 author={M. Sapir},
   title={Algorithmically complex residually finite groups},
  eprint={arXiv:1204.6506 [math.GR]}
}

\bib{Paul}{book}{
   author={Lyndon, Roger C.},
   author={Schupp, Paul E.},
   title={Combinatorial group theory},
   series={Classics in Mathematics},
   note={Reprint of the 1977 edition},
   publisher={Springer-Verlag, Berlin},
   date={2001},
   pages={xiv+339},
   isbn={3-540-41158-5},
   review={\MR{1812024 (2001i:20064)}},
}

\bib{Mal}{book}{
   author={Malcev, Anatoly Ivanovich},
   
   title={Algorithms and recursive functions},
  
   publisher={Nauka, Moscow},
   date={1965},
 
}

\bib{Moo}{article}{
   author={Moore, Justin Tatch},
   title={Fast growth in the F\o lner function for Thompson's group $F$},
   journal={Groups Geom. Dyn.},
   volume={7},
   date={2013},
   number={3},
   pages={633--651},
   issn={1661-7207},
   review={\MR{3095713}},
   doi={10.4171/GGD/201},
}
\bib{OO}{article}{
   author={Olshanskii, Alexander Yu.},
   author={Osin, Denis V.},
   title={A quasi-isometric embedding theorem for groups},
   journal={Duke Math. J.},
   volume={162},
   date={2013},
   number={9},
   pages={1621--1648},
   issn={0012-7094},
   review={\MR{3079257}},
   doi={10.1215/00127094-2266251},
}

\bib{SC}{article}{
   author={Saloff-Coste, Laurent},
   author={Zheng, Tianyi},
   title={Random walks on free solvable groups},
   journal={Math. Z.},
   volume={279},
   date={2015},
   number={3-4},
   pages={811--848},
   issn={0025-5874},
   review={\MR{3318252}},
   doi={10.1007/s00209-014-1395-2},
}

\bib{VER}{article}{
   author={Vershik, A.},
   title={Amenability and approximation of infinite groups},
   note={Selected translations},
   journal={Selecta Math. Soviet.},
   volume={2},
   date={1982},
   number={4},
   pages={311--330},
   issn={0272-9903},
   review={\MR{721030}},
}

\end{biblist}
\end{bibdiv}
\end{document}